\documentclass[reqno,A4paper]{amsart}

\usepackage{amsmath}
\usepackage{amssymb}
\usepackage{amsthm}
\usepackage{enumerate}
\usepackage{mathrsfs} 
\usepackage{eqlist}
\usepackage{array}
\usepackage[utf8]{inputenc}
\usepackage{hyperref}
\hypersetup{
colorlinks=true,
urlcolor=blue,
citecolor=blue}
\setlength{\textwidth}{150mm}
\setlength{\textheight}{206mm}
\setlength{\oddsidemargin}{5mm}
\setlength{\evensidemargin}{5mm}


\theoremstyle{plain}
\newtheorem{theorem}{Theorem}[section]

\newtheorem{lemma}{Lemma}[section]
\newtheorem{corollary}{Corollary}[theorem]



\theoremstyle{definition}

\newtheorem{remark}{\textup{Remark}} 





\numberwithin{equation}{section}



\begin{document}

\title[On some congruences using multiple harmonic sums of length three and four \LaTeX 2e]%
{On some congruences using multiple harmonic sums of length three and four}
\author[WALID KEHILA]%
{WALID KEHILA}

\newcommand{\acr}{\newline\indent}

\address{\llap{*\,}University of Science and Technology\acr
                   Houari Boumedienne\acr
                   USTHB\acr
                   Faculty of Mathematics, P.B. 32 El Alia, 16111 \acr
                   Algiers, Algeria}
\email{wkehilausthb@gmail.com; wkehila@usthb.dz}


\subjclass[2010]{11A07, 11B68, 11B50, 11B83.} 
\keywords{Harmonic numbers, Generalized harmonic numbers, Multiple harmonic sums.}

\begin{abstract}
In the present paper, we determine the sums $\sum_{j=1}^{p-1}\frac{H_j^{(s_1)}H_j^{(s_3)}}{j^{s_2}}$ and $\sum_{j=1}^{p-1}\frac{H_j^{(s_1)}H_j^{(s_3)}H_j^{(s_4)}}{j^{s_2}}$ modulo $p$ and modulo $p^2$ in certain cases. This is done by using multiple harmonic sums of length three and four, as well as, many other results. In addition, We recover three congruences conjectured by Z.-W Sun and solved later by the author himself and R. Meštrović.
\end{abstract}

\maketitle

\section{Introduction }

Multiple Harmonic  Sums (MHS) are defined by
$$H(s_1,\dots,s_k;n):=\sum_{1\le  j_1 < j_2 < \cdots < j_k \le {n}} \frac{1}{j_1^{s_1}\dots j_k^{s_k}},$$
with the conventions $H(s_1,\dots,s_k;r)=0$ for $r=0,\dots,k-1$, and, $H(\emptyset;0)=1$.
They satisfy the following recurrence relation \cite{tauraso}
$$H(s_1,\dots,s_k;n)=\sum_{k=1}^{n}\frac{1}{k^{s_k}}H(s_1,\dots,s_{k-1};k-1).$$
In the case when $s_1=\dots=s_k=s$, these sums are called the homogeneous multiple harmonic sums, and denoted
$$H(\{s\}^k;n):=H(\underbrace{s,\dots,s}_{\text{k times}} ;n)=\sum_{1\le  j_1 < j_2 < \cdots < j_k \le {n}} \frac{1}{(j_1\dots j_k)^s}.$$
When $k=1$, we find the sequence of generalized harmonic numbers, we may denote it
$$H_n^{(s)}=\sum_{k=1}^{n}\frac{1}{k^s},$$
note that the superscript is omitted in the case $s=1$.
 
When $n=p-1$ we may simplify notations as follows 
$$H(s_1,\dots,s_k):=\sum_{1\le  j_1 < j_2 < \cdots < j_k \le {p-1}} \frac{1}{j_1^{s_1}\dots j_k^{s_k}} \text{\quad and \quad }H(s):=H_{p-1}^{(s)}=\sum_{k=1}^{p-1}\frac{1}{k^s}.$$
In \cite{sun1} Sun has proposed some conjectures namely (Conjecture 1.1 and Conjecture 1.2); later, in \cite{sun2} he proved (Conjecture 1.2). Meštrović \cite{mistrovic2}, on the other hand, established the second part of Conjecture 1.1 using congruences of (MHS) of length three that can be found in \cite{zhao}.
  
In the present paper, we may unify the proof of these conjectures, as well as, proving many other congruences.

Now, we give some results which will be using in the present paper.
\begin{theorem}\cite{zhao} \label{cancelation}
Let $s$ and $l$ be two positive integers. Let $p$ be an odd prime such that $p\geq l+2$ and $p-1$ divides non of $ls$ and $ks+1$ for $k=1,\dots,l$. Then
\begin{equation*}
H(\{s\}^l)\equiv \begin{cases}
0&\pmod {p^2} \text{\quad for \quad }ls-1 \text{ even}\\
0&\pmod {p} \text{ \quad for \quad}ls-1 \text{ odd}.
\end{cases}
\end{equation*}
In particular, when $p\geq ls+3$, the above is always true.
\end{theorem}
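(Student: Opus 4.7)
The plan is to apply Newton's identities to the $p-1$ quantities $1/j^s$, $j=1,\dots,p-1$. Since the $k$-th power sum of these variables is $\sum_{j=1}^{p-1}1/j^{ks}=H(ks)$ while the $k$-th elementary symmetric function is exactly $H(\{s\}^k)$, one obtains the recursion
\[
l\,H(\{s\}^l)\;=\;\sum_{k=1}^{l}(-1)^{k-1}\,H(\{s\}^{l-k})\,H(ks),
\]
with the convention $H(\{s\}^0)=1$. The factor $l$ on the left is invertible mod $p$ because $p\geq l+2$.

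The first pass is a straightforward induction on $l$ proving the weaker statement $H(\{s\}^l)\equiv 0\pmod p$ under the arithmetic conditions of the theorem. The base $l=1$ is the classical $H(s)\equiv 0\pmod p$ when $p-1\nmid s$, and in the inductive step each summand $H(\{s\}^{l-k})H(ks)$ on the right carries at least one factor divisible by $p$ (the first by induction when $k<l$, the second by Fermat/Euler whenever the corresponding $p-1\nmid ks$ condition holds). Feeding this mod-$p$ information back into the same identity then upgrades the congruence: for every $k<l$ the product $H(\{s\}^{l-k})H(ks)$ now carries two factors divisible by $p$ and is therefore divisible by $p^2$, so only the term with $k=l$ survives, yielding
\[
l\,H(\{s\}^l)\;\equiv\;(-1)^{l-1}\,H(ls)\pmod{p^2}.
\]

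It therefore remains to pin down the single power sum $H(ls)$. When $ls-1$ is odd (so $ls$ is even) the classical $H(ls)\equiv 0\pmod p$ is already the desired conclusion. When $ls-1$ is even (so $ls$ is odd), one sharpens via the Wolstenholme-type trick: substituting $j\mapsto p-j$ inside $H(ls)$ and expanding $(p-j)^{-ls}$ to first order in $p$ gives
\[
H(ls)\;\equiv\;-\tfrac{ls\,p}{2}\,H(ls+1)\pmod{p^2},
\]
and the hypothesis $p-1\nmid ls+1$ then forces $H(ls+1)\equiv 0\pmod p$, hence $H(ls)\equiv 0\pmod{p^2}$. Dividing by the unit $l$ gives the theorem. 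The main nuisance I expect is bookkeeping: one must be careful that the first (mod $p$) pass genuinely covers all $H(\{s\}^m)$ with $m\le l$ before re-entering the recursion, and that the listed arithmetic conditions on $ks$ and on $ls+1$ supply exactly the power-sum vanishings needed at each stage.
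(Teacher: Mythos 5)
First, note that the paper does not actually prove this statement: Theorem \ref{cancelation} is imported from Zhao \cite{zhao} as a known input, so there is no in-paper proof to compare against. Your Newton's-identity skeleton $l\,H(\{s\}^l)=\sum_{k=1}^{l}(-1)^{k-1}H(\{s\}^{l-k})H(ks)$ is correct, and the closing Wolstenholme-type treatment of the single power sum $H(ls)$ for $ls$ odd is fine.

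The genuine gap is in the upgrade to modulus $p^2$. You assert that for every $k<l$ the product $H(\{s\}^{l-k})H(ks)$ "carries two factors divisible by $p$". The hypotheses do not guarantee this: they forbid $p-1\mid ls$ but say nothing about $p-1\mid ks$ for $k<l$. Concretely, take $p=7$, $s=3$, $l=3$; all hypotheses hold and $ls-1=8$ is even, so the mod-$p^2$ conclusion is claimed. In the identity $3H(\{3\}^3)=H(\{3\}^2)H(3)-H(3)H(6)+H(9)$ one has $H(6)\equiv-1\pmod 7$ (since $6\mid 6$) and $H(\{3\}^2)=\tfrac12\bigl(H(3)^2-H(6)\bigr)\equiv\tfrac12\pmod 7$, so in each cross term exactly one factor is a $p$-adic unit: neither term is a product of two multiples of $p$. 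Both terms are nonetheless divisible by $p^2$, but only because the \emph{other} factor, $H(3)$, is itself $\equiv 0\pmod{p^2}$. To repair the argument you need a strictly stronger induction: carry the full mod-$p^2$ conclusion for the shorter homogeneous sums, establish $H(ks)\equiv 0\pmod{p^2}$ for every $k<l$ with $ks$ odd (this is precisely where the otherwise-unused hypotheses $p-1\nmid ks+1$ for $k<l$ enter --- your writeup only invokes the condition on $ls+1$), and then run a parity analysis on $k$ showing that in each cross term at least one factor vanishes mod $p^2$, rather than both vanishing mod $p$. As written, the "two factors of $p$" step is false, even though the overall strategy can be made to work.
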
  
\begin{theorem}\cite{zhao}\label{cancelation2}
Let $s$ and $k$ be two non-negative integers. Suppose also that $p\geq sk+3$, then
\begin{equation*}
H(\{s\}^k) \equiv\begin{cases}
(-1)^k\frac{s(sk+1)p^2}{2(sk+2)}B_{p-sk-2}&\pmod {p^3} \text{\quad for }ks \text{ odd}\\
(-1)^{k-1} \frac{sp}{sk+1}B_{p-sk-1}&\pmod {p^{2}} \text{\quad for }ks \text{ even},
\end{cases} 
\end{equation*}
where $(B_n)_{n\in \mathbb{N}}$ is the sequence of Bernoulli numbers.
\end{theorem}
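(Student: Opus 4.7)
The natural strategy is to view $H(\{s\}^{k})$ as the $k$-th elementary symmetric polynomial evaluated at $(1/1^{s},\ldots,1/(p-1)^{s})$ and to reduce everything to the single power sums $H(js)=\sum_{i=1}^{p-1}1/i^{js}$. Setting $x_{i}=1/i^{s}$, Newton's identity yields
\[
  k\, H(\{s\}^{k}) \;=\; \sum_{j=1}^{k}(-1)^{j-1}\, H(\{s\}^{k-j})\, H(js),
\]
and since $k<p$ the factor $k$ is invertible modulo any power of $p$. The plan is to show that, modulo the target power of $p$, every mixed term $H(\{s\}^{k-j})\,H(js)$ with $1\le j\le k-1$ vanishes, so only the $j=k$ contribution $(-1)^{k-1}H(ks)$ survives, after which a Glaisher-type congruence for $H(ks)$ delivers the stated expression.

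The cross-term vanishing comes from combining Theorem~\ref{cancelation} with the classical congruences
\[
  H(m)\equiv -\frac{m}{m+1}\,p\,B_{p-m-1}\!\!\pmod{p^{2}} \qquad (m\text{ even},\ 2\le m\le p-3),
\]
\[
  H(m)\equiv -\frac{m(m+1)}{2(m+2)}\,p^{2}\,B_{p-m-2}\!\!\pmod{p^{3}} \qquad (m\text{ odd},\ 3\le m\le p-4),
\]
which can be established by the standard $k\leftrightarrow p-k$ pairing and a binomial expansion. In the case when $ks$ is even, $(k-j)s$ and $js$ have the same parity as $ks$, so for every $1\le j\le k-1$ either both $H(\{s\}^{k-j})$ and $H(js)$ are divisible by $p$ (when both exponents are even) or both are divisible by $p^{2}$ (when both are odd). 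Either way the product is divisible by $p^{2}$, so Newton's identity collapses modulo $p^{2}$ to $k\,H(\{s\}^{k})\equiv (-1)^{k-1}H(ks)$. Substituting the even-$m$ Glaisher congruence and dividing by $k$ recovers the first branch of the theorem.

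When $ks$ is odd, exactly one of $(k-j)s$ and $js$ is odd for every $1\le j\le k-1$; Theorem~\ref{cancelation} contributes a factor of $p^{2}$ from whichever of $H(\{s\}^{k-j})$, $H(js)$ carries an odd index, while the other factor carries a $p$ from the even-$m$ congruence. Hence every cross term lies in $p^{3}\ZZ$, and modulo $p^{3}$ only $(-1)^{k-1}H(ks)$ survives. The odd-$m$ mod-$p^{3}$ Glaisher refinement applied to $H(ks)$ then produces the second branch. The principal obstacle I foresee is not the Newton-plus-vanishing skeleton, which is routine once Theorem~\ref{cancelation} is available, but fixing the precise signs and Bernoulli-number normalizations in the refined Glaisher congruences so that the displayed constants $(-1)^{k}$ and $(-1)^{k-1}$ emerge exactly as stated; once those inputs are pinned down, only an elementary $p$-adic valuation bookkeeping remains.
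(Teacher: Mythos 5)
The paper does not actually prove this statement: it is imported verbatim from Zhao \cite{zhao}, so there is no internal proof to compare against. Your skeleton --- Newton's identity $kH(\{s\}^k)=\sum_{j=1}^{k}(-1)^{j-1}H(\{s\}^{k-j})H(js)$ for the elementary symmetric functions of $1/i^{s}$, Theorem \ref{cancelation} to kill the symmetric factors, and the Glaisher congruences for the power-sum factors --- is the standard route and is essentially how this result is proved in the source. The parity bookkeeping is right: when $ks$ is even every cross term with $1\le j\le k-1$ picks up at least $p^{2}$, when $ks$ is odd the two factors always split the parities so the product picks up at least $p^{3}$, and $k$ is invertible since $k\le ks\le p-3$; hence $kH(\{s\}^k)\equiv(-1)^{k-1}H(ks)$ to the required modulus.

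The one concrete defect is the sign of your even-index input. The correct congruence is $H(m)\equiv +\frac{m}{m+1}pB_{p-m-1}\pmod{p^2}$ for $m$ even (e.g.\ $H(2)\equiv\frac{2}{3}pB_{p-3}$), not $-\frac{m}{m+1}pB_{p-m-1}$. Indeed the $k=1$, $s$ even case of the theorem you are proving \emph{is} this congruence, and there the stated sign is $(-1)^{k-1}=+1$; with the minus sign you wrote, your even branch would come out as $(-1)^{k}\frac{sp}{sk+1}B_{p-sk-1}$, contradicting the statement. (Your odd-index input $-\frac{m(m+1)}{2(m+2)}p^{2}B_{p-m-2}$ is correct, as the Wolstenholme case $m=1$ confirms.) You explicitly defer the normalization of these inputs, and that is indeed the only thing needing repair; two smaller slips are that you call the even case ``the first branch'' when it is the second in the displayed statement, and that the argument (like the statement itself) really requires $s,k\ge 1$, since $s=0$ or $k=0$ makes the right-hand sides meaningless.
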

\begin{theorem}\cite{zhao} \label{weighttwo}
Let $s_1$,$s_2$ be two positive integers and $p\geq 3$. Let $s_1\equiv m,  s_2\equiv n\pmod {p-1}$ where $0\leq m, n\leq p-2$. If m,n$\geq 1$ then 
\begin{equation*}
H(s_1,s_2)\equiv\begin{cases}
\frac{(-1)^{n}}{m+n}{m+n \choose m}B_{p-m-n}&\pmod p  \text{\quad for\quad } p\geq m+n\\
0& \pmod p \text{\quad for\quad} p< m+n.
\end{cases}
\end{equation*}
Furthermore, when $s_1+s_2$ is even and $p> s_1+s_2+1$
\begin{align*}
H(s_1,s_2)&\equiv p\left((-1)^{s_1}s_2{s_1+s_2+1\choose s_1}-(-1)^{s_1}s_1{s_1+s_2+1\choose s_2}-s_1-s_2\right)\\
&\quad \times \frac{B_{p-s_1-s_2-1}}{2(s_1+s_2+1)}\pmod {p^{2}}.
\end{align*}
\end{theorem}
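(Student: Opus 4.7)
The plan is to combine two classical tools: the reflection symmetry $k\mapsto p-k$ applied to both summation indices in $H(s_1,s_2)$, and the stuffle product identity $H(s_1)H(s_2)=H(s_1,s_2)+H(s_2,s_1)+H(s_1+s_2)$. Since $a^{p-1}\equiv 1\pmod p$ for $\gcd(a,p)=1$, I may reduce to $s_1=m$ and $s_2=n$ with $1\le m,n\le p-2$ whenever working modulo $p$.

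Substituting $(i,j)\mapsto(p-j',p-i')$ in the defining sum and expanding $(p-k)^{-s}\equiv(-1)^s k^{-s}(1+sp/k)\pmod{p^2}$ produces the reflection identity
$$H(s_1,s_2)\equiv(-1)^{s_1+s_2}\bigl[H(s_2,s_1)+s_1 p\,H(s_2,s_1+1)+s_2 p\,H(s_2+1,s_1)\bigr]\pmod{p^2}.$$
Reducing this modulo $p$ and combining with the stuffle relation (whose right-hand side is $O(p)$ by Theorem~\ref{cancelation}) settles the mod $p$ claim when $s_1+s_2$ is even: the formula reduces to $H(s_1,s_2)\equiv 0\pmod p$, matching the stated value since $B_{p-m-n}=0$ for odd $p-m-n\ge 3$. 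For $s_1+s_2$ odd the combination only yields $H(s_1,s_2)+H(s_2,s_1)\equiv 0\pmod p$; to pin down $H(s_1,s_2)$ individually I would apply the partial fraction decomposition of $1/[i^m(p-i)^n]$ in the variable $i$, sum over $i=1,\dots,p-1$, and use Theorem~\ref{cancelation2} to evaluate the resulting single harmonic sums, after which binomial bookkeeping should collapse the expression to $\frac{(-1)^n}{m+n}\binom{m+n}{m}B_{p-m-n}$. The case $p<m+n$ follows because the corresponding Bernoulli numbers vanish for parity reasons.

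For the mod $p^2$ statement (with $s_1+s_2$ even) I would retain the $O(p)$ correction terms in the reflection identity. Adding it to the stuffle relation and solving,
$$2H(s_1,s_2)\equiv -H(s_1+s_2)+p\bigl[s_1 H(s_2,s_1+1)+s_2 H(s_2+1,s_1)\bigr]\pmod{p^2},$$
where $H(s_1)H(s_2)\equiv 0\pmod{p^2}$ has been discarded (each factor being divisible by $p$). Now $H(s_1+s_2)$ is given by Theorem~\ref{cancelation2} (noting $p-s_1-s_2-1$ is even), and the weight-three sums $H(s_2,s_1+1)$ and $H(s_2+1,s_1)$ need only be known modulo $p$, which is the mod $p$ case of the present theorem applied to the odd weight $s_1+s_2+1$. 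Substituting these inputs and using the identity $\binom{s_1+s_2+1}{s_2+1}=\binom{s_1+s_2+1}{s_1}$ should yield the stated closed form after dividing by $2$.

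The main obstacle will be the mod $p$ formula in the odd-weight case, where reflection and stuffle alone produce only a consistent but uninformative identity. Extracting the Bernoulli-number expression via partial fractions requires verifying that a rather delicate combination of binomial coefficients collapses to exactly $\frac{(-1)^n}{m+n}\binom{m+n}{m}B_{p-m-n}$; once that algebraic identity is in hand, the rest of the proof is routine substitution of the congruences recalled in Theorems~\ref{cancelation} and~\ref{cancelation2}.
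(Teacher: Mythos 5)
This theorem is imported from Zhao \cite{zhao} and is stated without proof in the paper, so there is no internal argument to compare yours against; I can only judge your proposal on its own terms. The parts that work: your reflection identity $H(s_1,s_2)\equiv(-1)^{s_1+s_2}\bigl[H(s_2,s_1)+s_1p\,H(s_2,s_1+1)+s_2p\,H(s_2+1,s_1)\bigr]\pmod{p^2}$ is correct, and combined with the stuffle relation it does give $H(s_1,s_2)\equiv 0\pmod p$ for even weight and reduces the mod $p^2$ claim to the mod $p$ evaluation of the odd-weight sums $H(s_2,s_1+1)$ and $H(s_2+1,s_1)$; I checked that substituting the stated mod $p$ formula into your relation $2H(s_1,s_2)\equiv -H(s_1+s_2)+p\bigl[s_1H(s_2,s_1+1)+s_2H(s_2+1,s_1)\bigr]$ and using Theorem~\ref{cancelation2} for $H(s_1+s_2)$ reproduces the claimed closed form exactly.

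The genuine gap is exactly where you locate it --- the mod $p$ formula for odd weight --- and the tool you propose cannot close it. The quantity $\sum_{i=1}^{p-1}1/\bigl[i^m(p-i)^n\bigr]$ is not $H(m,n)$ (the double sum runs over $i<j$, not over $i+j=p$), and whether you treat it by partial fractions or by expanding $(p-i)^{-n}=(-1)^ni^{-n}(1-p/i)^{-n}$, it collapses into a combination of the single sums $H(k)$ alone; it carries no depth-two information, so no amount of binomial bookkeeping applied to it can isolate $H(m,n)$. The standard argument (and the one in \cite{zhao}) is different: write $H(m,n)\equiv\sum_{j=1}^{p-1}j^{p-1-n}\sum_{i=1}^{j-1}i^{p-1-m}\pmod p$, evaluate the inner sum by Faulhaber's formula $\sum_{i=0}^{j-1}i^{k}=\frac{1}{k+1}\sum_{r}\binom{k+1}{r}B_rj^{k+1-r}$, and then apply $\sum_{j=1}^{p-1}j^{t}\equiv-1$ or $0\pmod p$ according as $(p-1)\mid t$ or not. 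Only the index $r=p-m-n$ survives, and the congruence $\frac{1}{p-m}\binom{p-m}{n}\equiv\frac{(-1)^{n+1}}{m+n}\binom{m+n}{m}\pmod p$ produces the stated coefficient. This computation also settles the case $p<m+n$ (the surviving index would be negative, so nothing survives); your justification of that case via vanishing of Bernoulli numbers is not correct, since $B_{p-m-n}$ is not even defined there. As it stands, the heart of the theorem --- which is also the essential input to your mod $p^2$ derivation --- remains unproved.
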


\begin{theorem}\cite{zhao}\cite{tauraso1}\label{weightthree} Suppose that $w:=s_1+s_2+s_3$ is odd, then for primes $p> w$ we have
\begin{equation*}
H(s_1,s_2,s_3)\equiv \left((-1)^{s_1}{w\choose s_1}-(-1)^{s_3}{w\choose s_3}\right)\frac{B_{p-w}}{2w}\pmod p.
\end{equation*}
In particular, when $s_1=s_3$, and $s_2$ is odd, we have
\begin{equation*}
H(s_1,s_2,s_1)\equiv 0\pmod p.
\end{equation*}
\end{theorem}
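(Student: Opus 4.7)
The plan is to combine the reversal symmetry $j\mapsto p-j$ with two applications of the stuffle (quasi-shuffle) product. Since $1/(p-k)^s\equiv (-1)^s/k^s\pmod p$, substituting $j_i=p-k_{4-i}$ in the definition of $H(s_1,s_2,s_3)$ reverses the ordering of the summation indices and yields
\begin{equation*}
H(s_1,s_2,s_3)\equiv(-1)^w H(s_3,s_2,s_1)\pmod p.
\end{equation*}
As $w$ is odd this reads $H(s_1,s_2,s_3)-H(s_3,s_2,s_1)\equiv 2H(s_1,s_2,s_3)\pmod p$, so it suffices to compute the difference $H(s_1,s_2,s_3)-H(s_3,s_2,s_1)$ modulo $p$.

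To this end I would expand the two stuffle products $H(s_1)\cdot H(s_2,s_3)$ and $H(s_3)\cdot H(s_2,s_1)$ by splitting according to the relative position of the free index among the two ordered ones. This produces
\begin{equation*}
H(s_1)H(s_2,s_3)=H(s_1,s_2,s_3)+H(s_2,s_1,s_3)+H(s_2,s_3,s_1)+H(s_1+s_2,s_3)+H(s_2,s_1+s_3)
\end{equation*}
and the analogous identity with $s_1$ and $s_3$ swapped,
\begin{equation*}
H(s_3)H(s_2,s_1)=H(s_3,s_2,s_1)+H(s_2,s_3,s_1)+H(s_2,s_1,s_3)+H(s_2+s_3,s_1)+H(s_2,s_1+s_3).
\end{equation*}
Theorem \ref{cancelation} with $l=1$ gives $H(s_1)\equiv H(s_3)\equiv 0\pmod p$, so both left-hand sides vanish modulo $p$. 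Subtracting the two identities cancels the three common terms $H(s_2,s_1,s_3)$, $H(s_2,s_3,s_1)$ and $H(s_2,s_1+s_3)$, leaving
\begin{equation*}
H(s_1,s_2,s_3)-H(s_3,s_2,s_1)\equiv H(s_2+s_3,s_1)-H(s_1+s_2,s_3)\pmod p.
\end{equation*}

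Finally, since $p>w$, Theorem \ref{weighttwo} applies to both double sums on the right. Using $\binom{w}{s_2+s_3}=\binom{w}{s_1}$ and $\binom{w}{s_1+s_2}=\binom{w}{s_3}$ yields
\begin{equation*}
2H(s_1,s_2,s_3)\equiv \frac{(-1)^{s_1}\binom{w}{s_1}-(-1)^{s_3}\binom{w}{s_3}}{w}\,B_{p-w}\pmod p,
\end{equation*}
which, upon dividing by $2$, is the claimed formula. The special case $s_1=s_3$ (where $s_2$ must then be odd in order that $w$ be odd) follows at once since the bracketed expression vanishes. No single step is delicate; the main care needed is the combinatorial bookkeeping ensuring that exactly the right three of the ten stuffle terms cancel in the subtraction, so that only the two double sums of Theorem \ref{weighttwo} survive on the right.
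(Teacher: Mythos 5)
Your argument is correct, but note that the paper offers no proof of this statement to compare against: it is quoted as a known result from \cite{zhao} and \cite{tauraso1}. Your route --- the reversal symmetry $j\mapsto p-j$ giving $H(s_1,s_2,s_3)\equiv(-1)^{w}H(s_3,s_2,s_1)\pmod p$, combined with the two stuffle expansions of $H(s_1)H(s_2,s_3)$ and $H(s_3)H(s_2,s_1)$ and the depth-two evaluation of Theorem \ref{weighttwo} --- is essentially the standard derivation found in those references, and every hypothesis you invoke is available here: since $p-1\ge w\ge s_i+2$, Theorem \ref{cancelation} with $l=1$ does give $H(s_1)\equiv H(s_3)\equiv 0\pmod p$, and both $H(s_2+s_3,s_1)$ and $H(s_1+s_2,s_3)$ have weight $w<p$ with no reduction modulo $p-1$ needed, so Theorem \ref{weighttwo} applies and the final division by $2$ is legitimate for odd $p$.
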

\section{Congruences using MHS of length three and two}

We start by establishing the following theorem which is motivated by a paper of Meštrović \cite{mistrovic2}.
\begin{theorem}\label{principal}For all positive integers $s_1,s_2,s_3$, we have the following 
\begin{equation}\label{principalequation}
\sum_{j=1}^{p-1}\frac{H_j^{(s_1)}H_j^{(s_3)}}{j^{s_2}}=-H(s_1,s_2,s_3)+H(s_3,s_1+s_2)+H(s_1+s_2+s_3)+H(s_3)H(s_1,s_2).
\end{equation}
Equivalently, we have
\begin{equation}\label{principalequation2}
\sum_{j=1}^{p-1}\frac{H_j^{(s_1)}H_j^{(s_3)}}{j^{s_2}}=H(s_1,s_3,s_2)+H(s_3,s_1,s_2)+H(s_3,s_1+s_2)+H(s_1+s_3,s_2)+H(s_1,s_2+s_3)+H(s_1+s_2+s_3).
\end{equation}
\end{theorem}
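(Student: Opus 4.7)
The plan is to establish the second identity \eqref{principalequation2} first by direct expansion of the left-hand side, and then derive \eqref{principalequation} from it via the stuffle (quasi-shuffle) product rule for MHS.

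For \eqref{principalequation2}, I would substitute $H_j^{(s_1)} = \sum_{a=1}^{j} a^{-s_1}$ and $H_j^{(s_3)} = \sum_{c=1}^{j} c^{-s_3}$ into the left-hand side to obtain
$$\sum_{\substack{1\le a\le j\\ 1\le c\le j\\ j\le p-1}} \frac{1}{a^{s_1}\, c^{s_3}\, j^{s_2}}.$$
I would then partition this triple sum according to the six mutually exclusive relative orderings of $(a,c,j)$ compatible with $a\le j$ and $c\le j$: namely $a<c<j$, $c<a<j$, $a=c<j$, $c<a=j$, $a<c=j$, and $a=c=j$. Each case collapses to exactly one MHS of length $3$, $2$, or $1$, producing (respectively) $H(s_1,s_3,s_2)$, $H(s_3,s_1,s_2)$, $H(s_1+s_3,s_2)$, $H(s_3,s_1+s_2)$, $H(s_1,s_2+s_3)$, and $H(s_1+s_2+s_3)$. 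Summing these six contributions yields \eqref{principalequation2} on the nose.

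For the equivalence with \eqref{principalequation}, I would apply the stuffle rule to the product $H(s_3)\cdot H(s_1,s_2)$. Writing $H(s_3)H(s_1,s_2)=\sum_{c}c^{-s_3}\sum_{a<b}a^{-s_1}b^{-s_2}$ and splitting by the position of $c$ relative to the pair $a<b$ gives the five cases $c<a<b$, $c=a<b$, $a<c<b$, $a<c=b$, $a<b<c$, and hence
$$H(s_3)H(s_1,s_2)=H(s_3,s_1,s_2)+H(s_1+s_3,s_2)+H(s_1,s_3,s_2)+H(s_1,s_2+s_3)+H(s_1,s_2,s_3).$$
Solving this for the four-term combination $H(s_1,s_3,s_2)+H(s_3,s_1,s_2)+H(s_1+s_3,s_2)+H(s_1,s_2+s_3)$ and substituting into \eqref{principalequation2} replaces those four summands by $H(s_3)H(s_1,s_2)-H(s_1,s_2,s_3)$, and leaves the remaining two summands $H(s_3,s_1+s_2)+H(s_1+s_2+s_3)$ untouched, which is precisely \eqref{principalequation}.

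Since the argument is purely combinatorial bookkeeping, there is no analytical obstacle; the only point requiring care is verifying that each of the two case-splits (six cases in the direct expansion, five cases in the stuffle product) is genuinely exhaustive and pairwise disjoint. Once that is confirmed, both identities follow by matching terms and involve no use of primality of $p$—they are in fact valid as identities over $\mathbb{Q}$.
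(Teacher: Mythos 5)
Your proposal is correct, but it reverses the paper's logic and uses a different decomposition for the direct part. The paper establishes \eqref{principalequation} first: it expands $H(s_1,s_2,s_3)=\sum_{j}j^{-s_2}\bigl(\sum_{i<j}i^{-s_1}\bigr)\bigl(\sum_{k>j}k^{-s_3}\bigr)$ and applies the complement identity $\sum_{k=j+1}^{p-1}k^{-s_3}=H(s_3)-H_j^{(s_3)}$, which is how the global factor $H(s_3)H(s_1,s_2)$ enters; it then deduces \eqref{principalequation2} from the stuffle relation for $H(s_3)H(s_1,s_2)$. You instead prove \eqref{principalequation2} directly by partitioning the index set $\{a\le j,\ c\le j,\ j\le p-1\}$ into the six exhaustive and disjoint order types, and then recover \eqref{principalequation} from the very same five-term stuffle expansion of $H(s_3)H(s_1,s_2)$; I checked that your six cases and your five stuffle cases are each exhaustive, pairwise disjoint, and correctly matched to the MHS they produce, so both identities follow. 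Your route has the advantage of making \eqref{principalequation2} manifestly an identity of the index set with arbitrary upper limit $n$ (the content of the paper's Remark \ref{principalremark}) without any reference to $p-1$, whereas the paper's complement trick is what naturally produces the form \eqref{principalequation} with the product $H(s_3)H(s_1,s_2)$, which is the version actually used in the mod $p^2$ applications; in the end the two arguments share the stuffle step and differ only in which of the two equivalent forms is derived from first principles.
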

\begin{proof}
For $j=1,\dots,p-1$, we have
$$\frac{1}{(j+1)^{s_3}}+\dots+\frac{1}{(p-1)^{s_3}}= -H^{(s_3)}_{j}+H(s_3). $$
We obtain the following
\begin{align*}
H(s_1,s_2,s_3)=&\sum_{1\le  i < j < k \le {p-1}} \frac{1}{i^{s_1}j^{s_2} k^{s_3}}=\sum_{j=2}^{p-1}\frac{1}{j^{s_2}}\sum_{i=1}^{j-1}\frac{1}{i^{s_1}}\sum_{k=j+1}^{p-1}\frac{1}{k^{s_3}}\\
=&\sum_{j=2}^{p-1}\frac{1}{j^{s_2}}(1+\frac{1}{2^{s_1}}+\dots+\frac{1}{(j-1)^{s_1}})(\frac{1}{(j+1)^{s_3}}+\dots+\frac{1}{(p-1)^{s_3}})\\
=&\sum_{j=2}^{p-1}\frac{1}{j^{s_2}}(1+\frac{1}{2^{s_1}}+\dots+\frac{1}{(j-1)^{s_1}})(-H^{(s_3)}_{j}+H(s_3))\\
=&-\sum_{j=1}^{p-1}\frac{1}{j^{s_2}}(H_j^{(s_1)}-\frac{1}{j^{s_1}})H_j^{(s_3)}+H(s_3)\sum_{j=1}^{p-1}\frac{1}{j^{s_2}}H_{j-1}^{(s_1)}\\
=&-\sum_{j=1}^{p-1}\frac{H_j^{(s_1)}H_j^{(s_3)}}{j^{s_2}}+\sum_{j=1}^{p-1}\frac{1}{j^{s_1+s_2}}(H_{j-1}^{(s_3)}+\frac{1}{j^{s_{3}}})+H(s_3)\sum_{j=1}^{p-1}\frac{1}{j^{s_2}}H_{j-1}^{(s_1)}\\
=&-\sum_{j=1}^{p-1}\frac{H_j^{(s_1)}H_j^{(s_3)}}{j^{s_2}}+H(s_3,s_1+s_2)+H(s_1+s_2+s_3)+H(s_3)H(s_1,s_2).
\end{align*}
Therefore
$$\sum_{j=1}^{p-1}\frac{H_j^{(s_1)}H_j^{(s_3)}}{j^{s_2}}=-H(s_1,s_2,s_3)+H(s_3,s_1+s_2)+H(s_1+s_2+s_3)+H(s_3)H(s_1,s_2).$$
Equation \eqref{principalequation2} follows immediately by the shuffle product relation
$$H(s_3)H(s_1,s_2)=H(s_3,s_1,s_2)+H(s_1,s_3,s_2)+H(s_1,s_2,s_3)+H(s_3+s_1,s_2)+H(s_1,s_3+s_2).$$
\end{proof}
\begin{remark}\label{principalremark}
One can easily observe that the previous proof  does not depend on $p-1$, so for all positive integers $n$, we have
$$\sum_{j=1}^{n}\frac{H_j^{(s_1)}H_j^{(s_3)}}{j^{s_2}}=-H(s_1,s_2,s_3;n)+H(s_3,s_1+s_2;n)+H(s_1+s_2+s_3;n)+H(s_3;n)H(s_1,s_2;n).$$
and 
\begin{equation*}
\begin{split}
\sum_{j=1}^{n}\frac{H_j^{(s_1)}H_j^{(s_3)}}{j^{s_2}}=&H(s_1,s_3,s_2;n)+H(s_3,s_1,s_2;n)+H(s_3,s_1+s_2;n)\\
&+H(s_1+s_3,s_2;n)+H(s_1,s_2+s_3;n)+H_n^{(s_1+s_2+s_3)}.
\end{split}
\end{equation*}
\end{remark}  
\subsection{Congruences mod $p$}
From Theorem \ref{principal} one can see that determining $\sum_{j=1}^{p-1}\frac{H_j^{(s_1)}H_j^{(s_3)}}{j^{s_2}}$ depends on determining MHS of length three and two, Using Theorems \ref{weightthree} and \ref{weighttwo} we obtain the following result.
\begin{theorem}\label{congmodp} Suppose that $w$ is odd then
\begin{equation}
\sum_{j=1}^{p-1}\frac{H_j^{(s_1)}H_j^{(s_3)}}{j^{s_2}}\equiv \left[\frac{(-1)^{s_1+1}}{2w}{w\choose s_1}+\frac{(-1)^{s_3}+2(-1)^{s_1+s_2}}{2w}{w\choose s_3} \right] B_{p-w} \pmod p.
\end{equation}
\end{theorem}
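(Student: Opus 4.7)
The plan is to start from formula \eqref{principalequation} of Theorem \ref{principal} and reduce each of the four terms on the right-hand side modulo $p$, using the hypothesis that $w=s_1+s_2+s_3$ is odd.

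First, since $w$ is odd, Theorem \ref{weightthree} applies directly and gives
\[
-H(s_1,s_2,s_3)\equiv\frac{(-1)^{s_1+1}}{2w}\binom{w}{s_1}B_{p-w}+\frac{(-1)^{s_3}}{2w}\binom{w}{s_3}B_{p-w}\pmod p.
\]
Next, I would apply Theorem \ref{weighttwo} to $H(s_3,s_1+s_2)$ (both entries are positive integers, their weight equals $w$, and $p>w$ will suffice), obtaining
\[
H(s_3,s_1+s_2)\equiv\frac{(-1)^{s_1+s_2}}{w}\binom{w}{s_3}B_{p-w}=\frac{2(-1)^{s_1+s_2}}{2w}\binom{w}{s_3}B_{p-w}\pmod p,
\]
which already accounts for the extra $2(-1)^{s_1+s_2}$ in the target formula.

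It then remains to show that the other two terms vanish modulo $p$. For $H(s_1+s_2+s_3)=H(w)$, since $w$ is odd, Theorem \ref{cancelation2} (with $k=1$) produces a factor $p^2$, so in particular $H(w)\equiv 0\pmod p$. For the product $H(s_3)H(s_1,s_2)$, the key observation is that $w$ odd forces the parities of $s_3$ and $s_1+s_2$ to differ; but either way, Theorem \ref{cancelation} applied with $l=1$ yields $H(s_3)\equiv 0\pmod p$ (in fact $\pmod{p^2}$ when $s_3$ is odd), while $H(s_1,s_2)$ is a $p$-integer modulo $p$ by Theorem \ref{weighttwo}, hence $H(s_3)H(s_1,s_2)\equiv 0\pmod p$.

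Putting the four contributions together and collecting the coefficients of $\binom{w}{s_1}B_{p-w}$ and $\binom{w}{s_3}B_{p-w}$ gives exactly the stated congruence. The only real care needed is in the sign bookkeeping when combining the contribution $\frac{(-1)^{s_3}}{2w}\binom{w}{s_3}$ coming from $-H(s_1,s_2,s_3)$ with the contribution $\frac{2(-1)^{s_1+s_2}}{2w}\binom{w}{s_3}$ coming from $H(s_3,s_1+s_2)$; these two add up to the factor $(-1)^{s_3}+2(-1)^{s_1+s_2}$ appearing in the theorem. The rest is a routine verification that the hypotheses of Theorems \ref{weightthree}, \ref{weighttwo}, \ref{cancelation} and \ref{cancelation2} all hold as soon as $p$ is sufficiently large (e.g.\ $p>w+2$), and I would state this mild assumption at the start of the proof.
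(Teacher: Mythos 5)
Your proposal is correct and follows exactly the paper's route: decompose via \eqref{principalequation}, evaluate $-H(s_1,s_2,s_3)$ by Theorem \ref{weightthree} and $H(s_3,s_1+s_2)$ by Theorem \ref{weighttwo}, and kill $H(w)$ and $H(s_3)H(s_1,s_2)$ modulo $p$. The only difference is that you spell out the vanishing of the last two terms and the size condition on $p$, which the paper leaves implicit.
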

\begin{proof}
From the first part of Theorem \ref{principal}, and in light of Theorems \ref{weightthree} and \ref{weighttwo}, we have modulo $p$
\begin{align*}
\sum_{j=1}^{p-1}\frac{H_j^{(s_1)}H_j^{(s_3)}}{j^{s_2}}&\equiv -\left(  (-1)^{s_1}{w\choose s_1}-(-1)^{s_3}{w\choose s_3}\frac{B_{p-w}}{2w}\right) +\frac{(-1)^{s_1+s_2}}{w}{w \choose s_3}B_{p-w}\\
&\equiv \left[\frac{(-1)^{s_1+1}}{2w}{w\choose s_1}+\frac{(-1)^{s_3}+2(-1)^{s_1+s_2}}{2w}{w\choose s_3} \right] B_{p-w}.
\end{align*}

\end{proof}
\begin{corollary}Let $s\geq 1$ and $r$ is odd, then for $p> 2s+r$, we have
\begin{equation}
\sum_{j=1}^{p-1}\frac{(H_j^{(s)})^2}{j^{r}}\equiv (-1)^{s+r}{2s+r\choose s}\frac{B_{p-2s-r}}{2s+r}  \pmod p.
\end{equation}
For all $s\geq 1$ and any prime $p\geq 3$ such that $p-1$ does not divide $3s$, in particular when $p\geq 3s+3$, we have
\begin{equation}\label{conjecture1}
\sum_{j=1}^{p-1}\frac{(H_j^{(s)})^2}{j^{s}}\equiv {3s\choose s}\frac{B_{p-3s}}{3s} \pmod p.
\end{equation}
Moreover, when $s$ is even, then
\begin{equation}
\sum_{j=1}^{p-1}\frac{(H_j^{(s)})^2}{j^{s}}\equiv 0 \pmod p.
\end{equation}
\end{corollary}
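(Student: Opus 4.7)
The plan is to read the three statements as two applications of Theorem \ref{congmodp} plus one direct verification via \eqref{principalequation}. For the first congruence, I would set $s_1=s_3=s$ and $s_2=r$ in Theorem \ref{congmodp}, noting that $w=2s+r$ is odd since $r$ is odd, and $p>2s+r$ is exactly the hypothesis of the theorem. The bracket collapses immediately:
\[
(-1)^{s+1}+(-1)^{s}+2(-1)^{s+r}=2(-1)^{s+r},
\]
so the coefficient of $\binom{w}{s}B_{p-w}$ reduces to $(-1)^{s+r}/w$, giving the stated formula.

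For \eqref{conjecture1} when $s$ is \emph{odd}, I would simply specialize the first congruence to $r=s$ (which is then odd), use $(-1)^{s+s}=1$, and observe that the hypothesis $p\geq 3s+3$ supplies $p>2s+r$. When $s$ is \emph{even}, $w=3s$ is even and Theorem \ref{congmodp} no longer applies. However, the claimed right-hand side $\binom{3s}{s}B_{p-3s}/(3s)$ already vanishes modulo $p$ (because $p-3s$ is an odd integer $\geq 3$, ruling out $p=3s+1$ via $p-1\nmid 3s$, and all odd Bernoulli numbers $B_{2k+1}$ with $k\geq 1$ are zero), so \eqref{conjecture1} in the even case is equivalent to the third congruence.

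To establish the third congruence (and hence close \eqref{conjecture1} for even $s$), I would return to the identity \eqref{principalequation} with $s_1=s_2=s_3=s$:
\[
\sum_{j=1}^{p-1}\frac{(H_j^{(s)})^2}{j^s}=-H(s,s,s)+H(s,2s)+H(3s)+H(s)\,H(s,s),
\]
and show each term is $\equiv 0\pmod p$. Fermat's little theorem (or Theorem \ref{cancelation} with $l=1$) kills $H(3s)$ under the hypothesis $p-1\nmid 3s$. Theorem \ref{cancelation} with $l=2$ and $l=3$ gives $H(\{s\}^2)\equiv 0$ and $H(\{s\}^3)\equiv 0\pmod p$, because $2s-1$ and $3s-1$ are both odd; this handles $H(s,s,s)$ and the product $H(s)H(s,s)$. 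Finally, Theorem \ref{weighttwo} applied to $H(s,2s)$ yields a multiple of $B_{p-3s}$, which vanishes by the same parity argument as above.

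The main obstacle, as I see it, is just the bookkeeping for the even-$s$ branch of \eqref{conjecture1}: Theorem \ref{congmodp} fails by parity, so one must revert to the raw expansion \eqref{principalequation} and verify the four terms separately, taking care that the hypothesis $p\geq 3s+3$ (or the weaker $p-1\nmid 3s$) is strong enough to trigger each invoked vanishing result and to exclude the exceptional index $p-3s=1$.
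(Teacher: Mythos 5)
Your proposal is correct and takes essentially the same route as the paper: everything reduces to the identity \eqref{principalequation} (or its packaged form, Theorem \ref{congmodp}) together with Theorems \ref{cancelation}, \ref{weighttwo} and \ref{weightthree}, plus the vanishing of odd-index Bernoulli numbers for the even-$s$ statements. The only organizational difference is that the paper obtains \eqref{conjecture1} uniformly in $s$ by applying Theorem \ref{cancelation} to $H(\{s\}^3)$ (which is $0$ mod $p$ regardless of the parity of $3s-1$) and Theorem \ref{weighttwo} to $H(s,2s)$, so your parity case split --- routing odd $s$ through the $w$-odd formula and even $s$ through the direct expansion --- is avoidable, though harmless.
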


\begin{proof}
For the first part, we apply Theorems \ref{cancelation} and \ref{weightthree}.

For the second part, we apply Theorems \ref{cancelation} and \ref{weighttwo}; hence
the last part yields immediately since Bernoulli numbers vanishes for odd values. This would solve the first part of Sun Conjecture see (\cite{sun1} Conjecture 1.2) and (\cite{sun2} Theorem 1.2).  
\end{proof}
\subsection{Some special cases}
In this section, we shall prove some congruences modulo $p$ that are not covered by Theorem \ref{congmodp}.
\begin{lemma}\cite{hoffman} Theorem 7.2, \cite{zhao} page 94 and Proposition 3.8

For $p\geq 11$ we have
\begin{equation}
\frac{1}{3}H(2,3,1)\equiv-\frac{1}{2}H(3,2,1)\equiv-H(3,1,2)\equiv -\frac{1}{2}H(1,4,1)\equiv H(4,1,1)\equiv -\frac{1}{6}B_{p-3}^2{\pmod {p}}.
\end{equation}
For $p\geq 7$ we have
\begin{equation}
H(1,2,2)\equiv -\frac{3}{2}B_{p-5}{\pmod {p}}.
\end{equation}
For $p\geq 17$ we have
\begin{equation}
H(5,3,4)\equiv H(4,3,5)\equiv 0{\pmod {p}}.
\end{equation}
\end{lemma}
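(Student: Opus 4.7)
The plan is to split the claims by weight and parity. For $H(1,2,2)$ the weight $w=5$ is odd, so Theorem~\ref{weightthree} applies directly with $s_1=1,\, s_2=s_3=2$ and gives
$H(1,2,2)\equiv \bigl((-1)\binom{5}{1}-\binom{5}{2}\bigr)\tfrac{B_{p-5}}{2\cdot 5} = -\tfrac{3}{2}B_{p-5}\pmod p$ for $p>5$.

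The weight-$6$ identities are subtler, since $w=6$ is even and Theorem~\ref{weightthree} does not apply. The strategy, due to Hoffman and Zhao, is to build a linear system among weight-$6$ MHS by expanding a well-chosen family of stuffle products, for instance $H(3)^2$, $H(3)\cdot H(2,1)$, $H(3)\cdot H(1,2)$, $H(2,1)^2$, $H(1,2)^2$ and $H(1,2)\cdot H(2,1)$. Each expansion yields a relation among the five length-$3$ targets, certain length-$4$ weight-$6$ MHS, and length-$\leq 2$ MHS of weight $6$. The length-$\leq 2$ and homogeneous terms are known from Theorems~\ref{cancelation}, \ref{cancelation2} and \ref{weighttwo}; the length-$4$ MHS are controlled by the reversal identity $H(s_1,\ldots,s_4)\equiv(-1)^w H(s_4,\ldots,s_1)\pmod p$ (with $(-1)^6=1$) together with further shuffle identities. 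Since Theorem~\ref{weighttwo} gives $H(1,2)\equiv B_{p-3}$ and $H(2,1)\equiv -B_{p-3}\pmod p$, every surviving product on the right-hand sides collapses to a rational multiple of $B_{p-3}^2$; solving the resulting linear system then produces the five stated coefficients. The hypothesis $p\geq 11$ is what guarantees all the auxiliary conditions in Theorems~\ref{cancelation}--\ref{weighttwo}.

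For the weight-$12$ statements $H(5,3,4)\equiv H(4,3,5)\equiv 0\pmod p$, reversal immediately yields $H(5,3,4)\equiv H(4,3,5)\pmod p$ (again $(-1)^{12}=1$), so only one vanishing is needed. Expanding, for example, $H(5)\cdot H(3,4)$ and $H(4)\cdot H(5,3)$ by the stuffle product expresses $H(5,3,4)$ modulo $p$ as a linear combination of length-$\leq 2$ weight-$12$ MHS and length-$4$ weight-$12$ MHS. Every length-$2$ weight-$12$ MHS is a rational multiple of $B_{p-12}$ by Theorem~\ref{weighttwo}, and $B_{p-12}=0$ for $p\geq 17$ because $p-12$ is an odd integer $\geq 5$; the length-$4$ contributions reduce to products $B_{p-a}B_{p-b}$ with $a+b=12$, whose coefficients cancel in the specific combination representing $H(5,3,4)$.

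The main obstacle throughout, and the reason these identities are cited rather than reproved in full, is the combinatorial bookkeeping of the stuffle/shuffle expansions together with the elimination of the auxiliary length-$4$ MHS; the detailed computations are carried out in \cite{hoffman} Theorem~7.2 and \cite{zhao} page~94 and Proposition~3.8, and are simply imported here.
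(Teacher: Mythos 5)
The paper offers no proof of this lemma at all: it is imported verbatim from \cite{hoffman} (Theorem 7.2) and \cite{zhao} (p.~94, Proposition 3.8), so there is no internal argument to compare yours against. Measured against that, your proposal is at least as complete as the paper. Your treatment of the weight-$5$ case is a genuine, self-contained proof: with $s_1=1$, $s_2=s_3=2$, $w=5$, Theorem~\ref{weightthree} gives $\bigl(-\binom{5}{1}-\binom{5}{2}\bigr)\frac{B_{p-5}}{10}=-\frac{3}{2}B_{p-5}$, valid for $p>5$, i.e.\ $p\geq 7$; this is a small improvement on the paper, which cites even this case rather than deriving it from its own Theorem~\ref{weightthree}.

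For the weight-$6$ and weight-$12$ congruences, however, your text is a strategy outline rather than a proof. The decisive steps --- writing down the stuffle expansions, eliminating the length-$4$ sums, solving the resulting linear system to get the five coefficients $3,-2,-1,-2,1$ times $-\frac{1}{6}B_{p-3}^2$, and exhibiting the cancellation that forces $H(5,3,4)\equiv 0$ --- are asserted but never carried out; the claim that ``the coefficients cancel in the specific combination representing $H(5,3,4)$'' is exactly the content that would need to be verified, since the surviving products $B_{p-3}B_{p-9}$ and $B_{p-5}B_{p-7}$ do not vanish individually. You are explicit that these computations are being imported from \cite{hoffman} and \cite{zhao}, which is precisely what the paper does, so there is no mismatch with the paper; but the first and third displays of the lemma remain citations in your write-up, not proofs. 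The pieces you do verify independently (the reversal relation giving $H(5,3,4)\equiv H(4,3,5)$ for even weight, and $B_{p-12}=0$ for $p\geq 17$) are correct.
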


\begin{theorem}
For $p\geq 11$ we have
\begin{equation}
2\sum_{j=1}^{p-1}\frac{H_j^{(2)}H_j}{j^3}\equiv -3\sum_{j=1}^{p-1}\frac{H_j^{(3)}H_j}{j^2}\equiv -6\sum_{j=1}^{p-1}\frac{H_j^{(3)}H_j^{(2)}}{j} \equiv-3\sum_{j=1}^{p-1}\frac{(H_j)^2}{j^4} \equiv  6\sum_{j=1}^{p-1}\frac{H_j^{(4)}H_j}{j}\equiv B_{p-3}^2\pmod p,
\end{equation}
\begin{equation}
\sum_{j=1}^{p-1}\frac{H_jH_j^{(2)}}{j^{2}}\equiv -\frac{1}{2}B_{p-5} \pmod p.
\end{equation}
For $p\geq 17$ we have
\begin{equation}
\sum_{j=1}^{p-1}\frac{H_j^{(5)}H_j^{(4)}}{j^{3}}\equiv 0 \pmod p.
\end{equation}
\end{theorem}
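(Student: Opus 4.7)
The plan is to apply Theorem \ref{principal}, specifically the identity \eqref{principalequation}, to each target sum with a carefully chosen triple $(s_1,s_2,s_3)$ of total weight $w=s_1+s_2+s_3$, and then reduce each of the four resulting pieces modulo $p$ using the preceding lemma together with Theorems \ref{cancelation}, \ref{cancelation2}, and \ref{weighttwo}. The schematic picture for every sum in the theorem is that \eqref{principalequation} gives
\begin{equation*}
\sum_{j=1}^{p-1}\frac{H_j^{(s_1)}H_j^{(s_3)}}{j^{s_2}}\equiv -H(s_1,s_2,s_3)+H(s_3,s_1+s_2)+H(w)+H(s_3)H(s_1,s_2)\pmod p,
\end{equation*}
and we arrange matters so that only the first term survives mod $p$ up to an explicit piece of $H(s_3,s_1+s_2)$.

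For the first block (weight $w=6$) I would take respectively
$(s_1,s_2,s_3)=(2,3,1),\ (3,2,1),\ (3,1,2),\ (1,4,1),\ (4,1,1)$
so that $H(s_1,s_2,s_3)$ coincides with one of the five tabulated entries in the lemma. In each case $H(6)\equiv 0\pmod p$ (Theorem \ref{cancelation}), and $H(s_3)H(s_1,s_2)\equiv 0\pmod p$ because $s_3\in\{1,2\}$ and both $H(1)$ and $H(2)$ vanish mod $p$ (Wolstenholme and Theorem \ref{cancelation}). The depth-two term $H(s_3,s_1+s_2)$ is controlled by Theorem \ref{weighttwo}: its mod-$p$ value is a rational multiple of $B_{p-6}$, which is $0$ since $p-6$ is an odd integer $\ge 5$ and odd Bernoulli numbers $B_{2k+1}$ ($k\ge 1$) vanish. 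Thus every sum in the first display reduces to $-H(s_1,s_2,s_3)\pmod p$, and the lemma delivers the common value $B_{p-3}^2$ up to the stated rational factors.

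For the second congruence I would use $(s_1,s_2,s_3)=(1,2,2)$, so $w=5$. Here $H(5)\equiv 0$ and $H(2)\equiv 0\pmod p$, so only two terms survive:
\begin{equation*}
\sum_{j=1}^{p-1}\frac{H_jH_j^{(2)}}{j^{2}}\equiv -H(1,2,2)+H(2,3)\pmod p.
\end{equation*}
The lemma gives $H(1,2,2)\equiv -\tfrac{3}{2}B_{p-5}$, and Theorem \ref{weighttwo} gives $H(2,3)\equiv -2B_{p-5}$, summing to $-\tfrac12 B_{p-5}$. For the third congruence I would take $(s_1,s_2,s_3)=(5,3,4)$, so $w=12$. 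The lemma furnishes $H(5,3,4)\equiv 0$; Theorem \ref{cancelation} gives $H(12)\equiv 0$ and $H(4)\equiv 0$ (hence $H(4)H(5,3)\equiv 0$); and Theorem \ref{weighttwo} gives $H(4,8)\equiv 0\pmod p$ since its mod-$p$ value is a multiple of $B_{p-12}$ and $p-12$ is odd and $\ge 5$. The sum is therefore $\equiv 0\pmod p$.

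The only subtle point in the whole argument is the systematic vanishing of the auxiliary depth-two tails $H(s_3,s_1+s_2)$ in the first and third congruences: this rests on the simple but easy-to-miss parity observation that $p$ odd forces $p-6$ and $p-12$ to be odd, so the Bernoulli numbers that would otherwise contaminate the answer are automatically zero. Once this is noted, the proof is purely mechanical substitution into \eqref{principalequation} together with the cited lemmas.
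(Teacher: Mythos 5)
Your proposal is correct and follows essentially the same route as the paper: substitute into \eqref{principalequation} with the triples matching the tabulated entries of the preceding lemma, kill $H(w)$ and $H(s_3)H(s_1,s_2)$ by Theorem \ref{cancelation} and Wolstenholme, and observe that the depth-two tail is a multiple of an odd-index Bernoulli number (the paper itself writes $H(4,8)\equiv \tfrac{165}{4}B_{p-12}=0$ for the last case and leaves the others as "similar"). Your write-up simply supplies the details the paper omits.
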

\begin{proof}
We shall prove only the last one, since the others are proved in a similar manner.

From Theorems \ref{principal}, \ref{weighttwo}  and the previous Lemma we find 
$$\sum_{j=1}^{p-1}\frac{H_j^{(5)}H_j^{(4)}}{j^{3}}\equiv -H(5,3,4)+ H(4,8)\equiv0+ \frac{165}{4} B_{p-12}=0 \pmod p.$$
\end{proof}
\subsection{Congruences mod $p^2$}
In this section, we prove some congruences modulo $p^2$ in certain cases only, since determining (MHS) of length three seems to be a much more involved problem.
\begin{lemma}\cite{zhao} Proposition 3.7

For $p\geq 7$ we have
\begin{equation}
\frac{10}{9}H(1,2,1)\equiv\frac{5}{3} H(2,1,1)\equiv \frac{10}{11}H(1,1,2)\equiv pB_{p-5}{\pmod {p^2}},
\end{equation}
\begin{equation}
H(1,3,1)\equiv 0{\pmod {p^2}},
\end{equation}
\begin{equation}
H(4,1)\equiv -B_{p-5}{\pmod {p^2}}.
\end{equation}
\end{lemma}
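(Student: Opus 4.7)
The plan is to treat all three blocks of congruences with a single toolkit: (i) the stuffle (quasi-shuffle) product on multiple harmonic sums; (ii) the \emph{reversal} identity obtained by the substitution $j\mapsto p-j$ in each summand together with the expansion $(p-j)^{-s}\equiv(-1)^s\bigl[j^{-s}+sp\,j^{-s-1}\bigr]\pmod{p^2}$, which yields
\begin{equation*}
H(s_1,\dots,s_k)\equiv(-1)^{w}\Bigl[H(s_k,\dots,s_1)+p\sum_{i=1}^{k}s_i\,H(s_k,\dots,s_{k-i+1}+1,\dots,s_1)\Bigr]\pmod{p^2},
\end{equation*}
with $w=s_1+\cdots+s_k$; and (iii) the mod-$p$ and mod-$p^2$ inputs already available from Theorems \ref{cancelation}--\ref{weightthree} together with Wolstenholme's theorem $H(1)\equiv 0\pmod{p^2}$.

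For the congruence $H(4,1)\equiv -B_{p-5}\pmod{p^2}$ I would start from the stuffle relation $H(1)H(4)=H(1,4)+H(4,1)+H(5)$, which combined with Wolstenholme and $H(5)\equiv 0\pmod{p^2}$ (Theorem~\ref{cancelation}) forces $H(4,1)+H(1,4)\equiv 0\pmod{p^2}$. Since stuffle$+$reversal at mod-$p^2$ precision alone yield only this one symmetric combination, I would lift to a mod-$p^3$ argument by invoking Glaisher's refinement $H(1)\equiv -\tfrac{p^2}{3}B_{p-3}\pmod{p^3}$, the mod-$p^2$ value of $H(4)$ from Theorem~\ref{cancelation2}, and the mod-$p^3$ value of $H(5)$; the reversal identity expanded one further $p$-order then furnishes the missing independent relation on $H(4,1)-H(1,4)$, and solving together with Theorem~\ref{weighttwo} applied to the correction sums $H(1,5),H(2,4),H(4,2),H(5,1)$ isolates $H(4,1)\pmod{p^2}$.

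For $H(1,3,1)\equiv 0\pmod{p^2}$, palindromy collapses the reversal formula to $2\,H(1,3,1)\equiv -p\bigl[H(1,3,2)+3\,H(1,4,1)+H(2,3,1)\bigr]\pmod{p^2}$, so it suffices to verify that the bracket vanishes modulo $p$. The preceding lemma in the excerpt supplies $H(1,4,1)$ and $H(2,3,1)$ as rational multiples of $B_{p-3}^2$ modulo $p$, and $H(1,3,2)\pmod p$ is obtained from the stuffle $H(1)H(3,2)=H(1,3,2)+H(3,1,2)+H(3,2,1)+H(4,2)+H(3,3)$ combined with Theorem~\ref{weighttwo} applied to the weight-$6$ length-two sums; the $B_{p-3}^2$ contributions cancel and the $B_{p-6}$ contribution vanishes. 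For the trio $H(1,2,1),H(2,1,1),H(1,1,2)$, three independent stuffle identities—$H(1)H(1,2)$, $H(2)H(1,1)$, and $H(1)H(2,1)$—together with the reversal of each of the three triples produce a linear system in the three unknowns modulo $p^2$; feeding in $H(1)\equiv 0$, $H(1,1)\equiv -\tfrac{p}{3}B_{p-3}$, $H(2)\equiv \tfrac{2p}{3}B_{p-3}$, and the weight-$4$ values of $H(1,3),H(3,1),H(2,2)$ from Theorem~\ref{weighttwo} reduces the system to the stated multiples of $pB_{p-5}$.

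The delicate step is the $H(4,1)$ case: separating $H(4,1)$ from $H(1,4)$ forces the mod-$p^3$ refinement above, and keeping careful track of the three Bernoulli numbers $B_{p-3},B_{p-5},B_{p-7}$ across the various correction terms—while ensuring that the denominators introduced by inverting factors like $5$ and $7$ remain $p$-adic integers for $p\geq 7$—is where the computation is most sensitive.
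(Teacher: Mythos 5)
The paper offers no proof of this lemma at all --- it is quoted directly from Zhao (Proposition 3.7) --- so you are reconstructing an argument from scratch. Your toolkit (stuffle plus the $j\mapsto p-j$ reversal) is the standard starting point, and your treatment of $H(1,3,1)$ is essentially correct and complete: palindromy does give $2H(1,3,1)\equiv -p\bigl[H(1,3,2)+3H(1,4,1)+H(2,3,1)\bigr]\pmod{p^2}$, and with $H(1,4,1)\equiv\tfrac13B_{p-3}^2$, $H(2,3,1)\equiv-\tfrac12B_{p-3}^2$ from the preceding lemma and $H(1,3,2)\equiv-\tfrac12B_{p-3}^2$ from your stuffle, the bracket is $-\tfrac12+1-\tfrac12=0$. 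The other two parts, however, have genuine gaps. For $H(4,1)$: the claim that expanding the reversal one further $p$-order ``furnishes the missing independent relation on $H(4,1)-H(1,4)$'' is false. At every $p$-adic order, the reversal applied to either $H(4,1)$ or $H(1,4)$ yields a congruence for the \emph{symmetric} combination $H(4,1)+H(1,4)$ (with corrections that are other MHS), exactly as the stuffle does; refining $H(1)$, $H(4)$, $H(5)$ to mod $p^3$ never breaks this symmetry. Isolating $H(4,1)\pmod{p^2}$ in odd weight requires an independent evaluation --- in Zhao it comes from the explicit Bernoulli-polynomial computation of $\sum_{k}k^{-1}\sum_{j<k}j^{-4}$ via power sums --- which your sketch does not supply.

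For the trio $x=H(1,1,2)$, $y=H(1,2,1)$, $z=H(2,1,1)$, your linear system is rank-deficient. The three stuffles give, mod $p^2$, the combinations $2x+y$, $y+2z$ and $x+y+z$, and the third is exactly half the sum of the first two; the reversal of $(1,1,2)$ determines $x-z$, which is again $\tfrac12$ of the difference of the first two, and the reversal of the palindrome $(1,2,1)$ is vacuous (it only yields a weight-$5$ relation mod $p$). So you obtain only two independent relations for three unknowns and cannot isolate $y$; a third, genuinely new input (again Zhao's direct power-sum computation, or an equivalent) is indispensable. As a useful by-product, the two combinations you \emph{can} compute, $2x+y\equiv\tfrac{13}{10}pB_{p-5}$ and $y+2z\equiv\tfrac{3}{10}pB_{p-5}$, are compatible with $x\equiv\tfrac{11}{10}pB_{p-5}$ and $z\equiv\tfrac35pB_{p-5}$ only if $y\equiv-\tfrac{9}{10}pB_{p-5}$; this indicates a sign slip in the first displayed chain of the lemma as printed (it should read $-\tfrac{10}{9}H(1,2,1)\equiv pB_{p-5}$), consistent with the subsequent corollary's use of $H(1,2,1)\equiv H(1,3)$ together with $H(1,3)\equiv-\tfrac{9}{10}pB_{p-5}$ from Theorem \ref{weighttwo}.
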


\begin{corollary}For $p\geq 7$ we have
\begin{equation}
\sum_{j=1}^{p-1}\frac{H_j^{2}}{j^{2}}\equiv H(4) \equiv  \frac{4}{5}pB_{p-5}{\pmod {p^2}},
\end{equation}
\begin{equation}
\sum_{j=1}^{p-1}\frac{H_j^{(2)}H_j}{j}\equiv -\frac{7}{10}pB_{p-5}{\pmod {p^2}},
\end{equation}
\begin{equation}
\sum_{j=1}^{p-1}\frac{H_j^{2}}{j^{3}}\equiv B_{p-5}{\pmod {p^2}}.
\end{equation}
\end{corollary}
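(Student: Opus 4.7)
The plan is to apply Theorem \ref{principal} to each of the three sums, then reduce the resulting length-three and length-two MHS via the preceding lemma, Theorems \ref{weighttwo} and \ref{cancelation2}, and Wolstenholme's congruence $H(1)\equiv 0\pmod{p^2}$.

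For $\sum_{j=1}^{p-1}H_j^{2}/j^{2}$, I would take $(s_1,s_2,s_3)=(1,2,1)$ in Theorem \ref{principal}, obtaining
\[
\sum_{j=1}^{p-1}\frac{H_j^{2}}{j^{2}}=-H(1,2,1)+H(1,3)+H(4)+H(1)H(1,2).
\]
Wolstenholme kills the last term modulo $p^2$. The value of $H(1,2,1)$ is read off the preceding lemma, and $H(1,3)$ modulo $p^2$ is the $s_1+s_2$-even branch of Theorem \ref{weighttwo} with $s_1=1,\ s_2=3$. These two contributions cancel, leaving exactly $H(4)$, and the final equality $H(4)\equiv\frac{4p}{5}B_{p-5}\pmod{p^2}$ is the case $(s,k)=(4,1)$ of Theorem \ref{cancelation2}.

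For $\sum_{j=1}^{p-1}H_j^{(2)}H_j/j$, I would apply Theorem \ref{principal} with $(s_1,s_2,s_3)=(2,1,1)$:
\[
\sum_{j=1}^{p-1}\frac{H_j^{(2)}H_j}{j}=-H(2,1,1)+H(1,3)+H(4)+H(1)H(2,1).
\]
Again the last term is killed by Wolstenholme. Substituting $H(2,1,1)$ from the lemma, $H(1,3)$ from Theorem \ref{weighttwo}, and $H(4)$ from Theorem \ref{cancelation2} and collecting fractions collapses the right-hand side to $-\frac{7p}{10}B_{p-5}$.

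For $\sum_{j=1}^{p-1}H_j^{2}/j^{3}$, I would take $(s_1,s_2,s_3)=(1,3,1)$ in Theorem \ref{principal}:
\[
\sum_{j=1}^{p-1}\frac{H_j^{2}}{j^{3}}=-H(1,3,1)+H(1,4)+H(5)+H(1)H(1,3).
\]
The last term vanishes modulo $p^2$ by Wolstenholme, $H(1,3,1)\equiv 0\pmod{p^2}$ is in the lemma, and $H(5)\equiv 0\pmod{p^2}$ follows from Theorem \ref{cancelation2} with $(s,k)=(5,1)$ (the odd-weight branch carries an explicit $p^2$). The main obstacle is $H(1,4)$ modulo $p^2$, since $s_1+s_2=5$ is odd and the even-weight branch of Theorem \ref{weighttwo} does not apply; I would circumvent this via the stuffle identity $H(1)H(4)=H(1,4)+H(4,1)+H(5)$, which, combined with $H(1)\equiv 0\pmod{p^2}$ and $H(4,1)\equiv -B_{p-5}\pmod{p^2}$ from the lemma, gives $H(1,4)\equiv B_{p-5}\pmod{p^2}$ and finishes the proof.
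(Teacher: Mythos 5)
Your proposal is correct and follows essentially the same route as the paper: the same substitutions $(s_1,s_2,s_3)=(1,2,1)$, $(2,1,1)$, $(1,3,1)$ into Theorem \ref{principal}, Wolstenholme's $H(1)\equiv 0\pmod{p^2}$ to kill the product terms, and the same values of $H(1,2,1)$, $H(2,1,1)$, $H(1,3,1)$, $H(1,3)$, $H(4)$ and $H(4,1)$ from the preceding lemma and Theorems \ref{weighttwo} and \ref{cancelation2}. The only cosmetic difference is that you obtain $H(1,4)\equiv B_{p-5}\pmod{p^2}$ from the explicit stuffle $H(1)H(4)=H(1,4)+H(4,1)+H(5)$, whereas the paper cites the equivalent symmetric-sum relation from Zhao's Theorem 3.2.
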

\begin{proof}
From Theorem \ref{weighttwo} and the previous Lemma we obtain
$$H(1,2,1)\equiv \frac{9}{10}pB_{p-5}\equiv H(1,3){\pmod {p^2}}.$$
Substituting in \eqref{principalequation} and using Theorem \ref{cancelation2} we find
$$\sum_{j=1}^{p-1}\frac{H_j^{2}}{j^{2}}\equiv H(4) \equiv  \frac{4}{5}pB_{p-5}{\pmod {p^2}}.$$
This establishes the second part of Sun Conjecture see (\cite{sun1} conjecture 1.1) proved by Meštrović see \cite{mistrovic2}.  

From Theorems \ref{weighttwo}, \ref{cancelation2}, and the previous Lemma we get 
$$\sum_{j=1}^{p-1}\frac{H_j^{(2)}H_j}{j}\equiv -\frac{3}{5}pB_{p-5}-\frac{9}{10}pB_{p-5}+\frac{4}{5}pB_{p-5}\equiv -\frac{7}{10}pB_{p-5}{\pmod {p^2}}.$$
From Equation \eqref{principalequation} and the previous Lemma we obtain
$$\sum_{j=1}^{p-1}\frac{H_j^{2}}{j^{3}}\equiv H(1,4){\pmod {p^2}}.$$
Since
$$H(4,1)\equiv -B_{p-5}{\pmod {p^2}},$$
we have from Theorem 3.2 in \cite{zhao}
$$H(4,1)+H(1,4)\equiv-B_{p-5}+H(1,4) \equiv-\frac{5}{6}pB_{p-6}=0{\pmod {p^2}}.$$
\end{proof}
\begin{corollary}\label{conjecture2}Suppose $s$ is even, then for $p> 3s+1$   we have
\begin{equation}\label{conjecture2equa}
\sum_{j=1}^{p-1}\frac{(H_j^{(s)})^2}{j^{s}}\equiv  \left[{3s+1\choose s-1}+\frac{s}{2} \right] p\frac{B_{p-3s-1}}{3s+1} {\pmod {p^2}}.
\end{equation}
\end{corollary}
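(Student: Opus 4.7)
The plan is to specialize equation \eqref{principalequation} of Theorem \ref{principal} to $s_1 = s_2 = s_3 = s$, which gives
$$\sum_{j=1}^{p-1}\frac{(H_j^{(s)})^2}{j^{s}} = -H(s,s,s) + H(s,2s) + H(3s) + H(s)\,H(s,s).$$
The key observation is that with $s$ even the weights $s$, $2s$, $3s$ are all even, so Theorem \ref{cancelation2} shows that each of $H(s)$, $H(s,s)=H(\{s\}^2)$, $H(s,s,s)=H(\{s\}^3)$ and $H(3s)$ is of order $p$ modulo $p^2$. In particular $H(s)\,H(s,s) \equiv 0 \pmod{p^2}$, which eliminates the most awkward term and reduces everything to MHS of length one and two.

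Next, I would substitute explicit formulas modulo $p^2$ for the three surviving terms. Theorem \ref{cancelation2} yields $H(\{s\}^3) \equiv \frac{sp}{3s+1}B_{p-3s-1}$ and $H(3s)\equiv \frac{3sp}{3s+1}B_{p-3s-1}$ modulo $p^2$. Since $s_1+s_2 = 3s$ is even and $p>3s+1$, the second (mod $p^2$) part of Theorem \ref{weighttwo} applies, giving, using $(-1)^s=1$,
$$H(s,2s) \equiv p\left(2s\binom{3s+1}{s} - s\binom{3s+1}{2s} - 3s\right)\frac{B_{p-3s-1}}{2(3s+1)} \pmod{p^2}.$$
Collecting the three contributions over the common factor $\frac{p\,B_{p-3s-1}}{2(3s+1)}$ yields
$$\sum_{j=1}^{p-1}\frac{(H_j^{(s)})^2}{j^{s}} \equiv \frac{p\,B_{p-3s-1}}{2(3s+1)}\left[2s\binom{3s+1}{s} - s\binom{3s+1}{2s} + s\right] \pmod{p^2}.$$

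The main remaining obstacle is the binomial identity
$$2s\binom{3s+1}{s} - s\binom{3s+1}{2s} = 2\binom{3s+1}{s-1},$$
which is what is needed to match the target expression $\left[\binom{3s+1}{s-1} + \tfrac{s}{2}\right]\frac{p\,B_{p-3s-1}}{3s+1}$. Using $\binom{3s+1}{2s} = \binom{3s+1}{s+1} = \tfrac{2s+1}{s+1}\binom{3s+1}{s}$ and $\binom{3s+1}{s-1} = \tfrac{s}{2(s+1)}\binom{3s+1}{s}$, one reduces both sides to a multiple of $\binom{3s+1}{s}$, and the numerical check $2 - \tfrac{2s+1}{s+1} = \tfrac{1}{s+1}$ closes the argument. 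Finally, since $s$ is even, the hypothesis $p>3s+1$ forces $p\geq 3s+3$ (as $3s+2$ is even), so all invocations of Theorem \ref{cancelation2} are justified; the condition $p>s_1+s_2+1$ for the mod $p^2$ part of Theorem \ref{weighttwo} holds by assumption.
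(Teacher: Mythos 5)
Your proposal is correct and follows essentially the same route as the paper: specialize \eqref{principalequation} to $s_1=s_2=s_3=s$, discard $H(s)H(s,s)$ as vanishing modulo $p^2$, evaluate the surviving terms via Theorems \ref{cancelation2} and \ref{weighttwo}, and close with the binomial identity $2s{3s+1\choose s}-s{3s+1\choose 2s}=2{3s+1\choose s-1}$. You are in fact somewhat more explicit than the paper about why the product term drops out and about the prime-size conditions needed to invoke the cited theorems.
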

\begin{proof}
Using the identity
$${n\choose k+1}=\frac{n-k}{k+1}{n\choose k},$$
and from the first part of Theorem \ref{principalequation}, we find modulo $p^2$
\begin{align*}
\sum_{j=1}^{p-1}\frac{(H_j^{(s)})^2}{j^{s}}&\equiv -H(s,s,s)+H(s,2s)+H(3s) \\
&\equiv \left[ -s+\frac{2s{3s+1\choose s}-s{3s+1\choose 2s}-3s}{2}  +3s\right] p\frac{B_{p-3s-1}}{3s+1}\\
&=\left[2+\frac{{3s+1\choose s}-\frac{s}{s+1}{3s+1\choose s}-3}{2} \right] sp\frac{B_{p-3s-1}}{3s+1}\\
&=\left[ 1+\frac{{3s+1\choose s}}{s+1} \right] \frac{sp}{2}\frac{B_{p-3s-1}}{3s+1}\\
&=\left[{3s+1\choose s-1}+\frac{s}{2} \right] p\frac{B_{p-3s-1}}{3s+1}.
\end{align*}
Congruences \eqref{conjecture1} and \eqref{conjecture2equa} establishes  Conjecture 1.2 (see \cite{sun1} and \cite{sun2} Theorem 1.2) with a slightly different approach.
\end{proof}

\section{Some special congruences using MHS of length four}
In this section, we prove some congruences using some results on (MHS) of length four.
\begin{theorem}\label{principallengthfour}
The following equality holds true
\begin{multline}\label{principallengthfoureq}
-\sum_{j=1}^{p-1}\frac{H_j^{(s_1)}H_j^{(s_3)}H_j^{(s_4)}}{j^{s_2}}=H(s_4)\sum_{j=1}^{p-1}\frac{H_j^{(s_1)}H_j^{(s_3)}}{j^{s_2}}+H(s_1,s_3,s_2,s_4)+H(s_3,s_1,s_2,s_4)\\
+H(s_3,s_1+s_2,s_4)+H(s_1+s_3,s_2,s_4)+H(s_1,s_2+s_3,s_4)+H(s_1+s_2+s_3,s_4).
\end{multline}

\end{theorem}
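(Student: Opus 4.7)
The plan is to mirror the strategy of Theorem~\ref{principal} one level up, feeding the general-$n$ identity from Remark~\ref{principalremark} into the weight $1/k^{s_4}$ and summing. Nothing new is needed beyond the length-$3$ identity \eqref{principalequation2} and the basic recurrence for MHS recalled in the introduction.

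First, instantiate Remark~\ref{principalremark} at $n = k-1$:
\begin{equation*}
\sum_{j=1}^{k-1}\frac{H_j^{(s_1)}H_j^{(s_3)}}{j^{s_2}} = H(s_1,s_3,s_2;k-1) + H(s_3,s_1,s_2;k-1) + H(s_3,s_1+s_2;k-1) + H(s_1+s_3,s_2;k-1) + H(s_1,s_2+s_3;k-1) + H_{k-1}^{(s_1+s_2+s_3)}.
\end{equation*}
Divide by $k^{s_4}$ and sum over $k=1,\dots,p-1$. Every resulting summand on the right has the shape $\sum_{k=1}^{p-1} H(a_1,\dots,a_r;k-1)/k^{s_4}$, which by the MHS recurrence $H(a_1,\dots,a_r,s_4) = \sum_{k} H(a_1,\dots,a_r;k-1)/k^{s_4}$ equals $H(a_1,\dots,a_r,s_4)$. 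The last summand similarly yields $H(s_1+s_2+s_3,s_4)$. So the right-hand side collapses to exactly the six MHS of lengths three and four appearing in \eqref{principallengthfoureq}.

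Next, swap the order of summation on the left-hand side, exactly as in the proof of Theorem~\ref{principal}:
\begin{equation*}
\sum_{k=1}^{p-1}\frac{1}{k^{s_4}}\sum_{j=1}^{k-1}\frac{H_j^{(s_1)}H_j^{(s_3)}}{j^{s_2}} = \sum_{j=1}^{p-1}\frac{H_j^{(s_1)}H_j^{(s_3)}}{j^{s_2}}\sum_{k=j+1}^{p-1}\frac{1}{k^{s_4}} = \sum_{j=1}^{p-1}\frac{H_j^{(s_1)}H_j^{(s_3)}}{j^{s_2}}\bigl(H(s_4)-H_j^{(s_4)}\bigr).
\end{equation*}
The boundary case $j=p-1$ is harmless: the inner sum is empty and $H(s_4)-H_{p-1}^{(s_4)} = 0$. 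Splitting the product and equating with the right-hand side computed in the previous step isolates $\sum_{j=1}^{p-1} H_j^{(s_1)}H_j^{(s_3)}H_j^{(s_4)}/j^{s_2}$ and produces identity~\eqref{principallengthfoureq}.

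There is no real obstacle: the argument is purely combinatorial, and the work reduces to bookkeeping. The main point to be careful about is the sign convention when isolating the triple-product sum from $H(s_4)\sum H_j^{(s_1)}H_j^{(s_3)}/j^{s_2}$, and the observation that each MHS $H(a_1,\dots,a_r;k-1)$ lifts cleanly, after summing against $1/k^{s_4}$, to the depth-$(r+1)$ MHS $H(a_1,\dots,a_r,s_4)$—the same lifting mechanism that powers the original length-three identity.
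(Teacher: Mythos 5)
Your proposal is correct and follows exactly the paper's own argument: instantiate the general-$n$ identity of Remark~\ref{principalremark} at $n=l-1$, sum against $1/l^{s_4}$ using the MHS recurrence to lift each depth-$r$ sum to depth $r+1$, and swap the order of summation on the left to produce the factor $H(s_4)-H_j^{(s_4)}$. The only differences are notational (your $k$ versus the paper's $l$), so nothing further is needed.
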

\begin{proof}From Remark \ref{principalremark} and Equation \eqref{principalequation2} we find
\begin{align*}
\sum_{l=1}^{p-1}\frac{1}{l^{s_4}}\sum_{j=1}^{l-1}\frac{H_j^{(s_1)}H_j^{(s_3)}}{j^{s_2}}=&\sum_{l=1}^{p-1}\frac{1}{l^{s_4}}\bigg[  H(s_1,s_3,s_2;l-1)+H(s_3,s_1,s_2;l-1)+H(s_3,s_1+s_2;l-1)\\
&+H(s_1+s_3,s_2;l-1)+H(s_1,s_2+s_3;l-1)+H(s_1+s_2+s_3;l-1)\bigg]\\
\sum_{j=1}^{p-1}\bigg(\sum_{l=j+1}^{p-1}\frac{1}{l^{s_4}}\bigg)\frac{H_j^{(s_1)}H_j^{(s_3)}}{j^{s_2}}=&H(s_1,s_3,s_2,s_4)+H(s_3,s_1,s_2,s_4)+H(s_3,s_1+s_2,s_4)+H(s_1+s_3,s_2,s_4)\\
&+H(s_1,s_2+s_3,s_4)+H(s_1+s_2+s_3,s_4)\\
\sum_{j=1}^{p-1}\big(H(s_4)-H_j^{(s_4)}\big)\frac{H_j^{(s_1)}H_j^{(s_3)}}{j^{s_2}}=&H(s_1,s_3,s_2,s_4)+H(s_3,s_1,s_2,s_4)+H(s_3,s_1+s_2,s_4)+H(s_1+s_3,s_2,s_4)\\
&+H(s_1,s_2+s_3,s_4)+H(s_1+s_2+s_3,s_4).
\end{align*}
Hence, \ref{principallengthfoureq} yields immediately.
\end{proof}

\begin{corollary}For $p\geq 7$ we have
\begin{equation}
\sum_{j=1}^{p-1}\frac{H_j^{3}}{j}\equiv \frac{3}{2}pB_{p-5} {\pmod {p^2}}.
\end{equation}
\end{corollary}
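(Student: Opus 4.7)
The plan is to specialize Theorem \ref{principallengthfour} to $s_1=s_2=s_3=s_4=1$. Under this substitution, the six MHS terms on the right-hand side simplify as follows: $H(s_1,s_3,s_2,s_4)$ and $H(s_3,s_1,s_2,s_4)$ both become $H(1,1,1,1)$; $H(s_3,s_1+s_2,s_4)$ and $H(s_1,s_2+s_3,s_4)$ both become $H(1,2,1)$; the term $H(s_1+s_3,s_2,s_4)$ becomes $H(2,1,1)$; and $H(s_1+s_2+s_3,s_4)$ becomes the length-two sum $H(3,1)$. Meanwhile the prefactor $H(s_4)\sum_j H_j^{(s_1)}H_j^{(s_3)}/j^{s_2}$ specializes to $H(1)\cdot\sum_j H_j^2/j$, which vanishes modulo $p^2$ since Wolstenholme's congruence gives $H_{p-1}\equiv 0\pmod{p^2}$ and the remaining sum is manifestly $p$-integral.

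Consequently, modulo $p^2$, the identity reduces to
\[
-\sum_{j=1}^{p-1}\frac{H_j^{3}}{j}\equiv 2\,H(1,1,1,1)+2\,H(1,2,1)+H(2,1,1)+H(3,1).
\]
I then evaluate each summand modulo $p^2$ using previously stated results. Theorem \ref{cancelation2} with $s=1$, $k=4$ (case $sk$ even) gives $H(1,1,1,1)\equiv -\tfrac{p}{5}B_{p-5}$. The values of $H(1,2,1)$ and $H(2,1,1)$ are furnished by the preceding Lemma (Zhao's Proposition 3.7); the sign used for $H(1,2,1)$ must be the one consistent with $H(1,2,1)\equiv H(1,3)\pmod{p^2}$, where $H(1,3)$ is pinned down independently by the mod-$p^2$ part of Theorem \ref{weighttwo}. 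Finally, applying the mod-$p^2$ part of Theorem \ref{weighttwo} with $s_1=3$, $s_2=1$ (noting $s_1+s_2=4$ is even) yields $H(3,1)\equiv\tfrac{p}{10}B_{p-5}$.

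Substituting the four tabulated values into the displayed congruence, rewriting each coefficient of $p\,B_{p-5}$ over the common denominator $10$, and summing the numerators yields a total numerator of $-15$, i.e.\ a coefficient of $-\tfrac{3}{2}$. Multiplying by $-1$ gives the claimed $\sum_{j=1}^{p-1}H_j^{3}/j\equiv\tfrac{3}{2}p\,B_{p-5}\pmod{p^2}$.

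The main obstacle is the sign bookkeeping: the clean coefficient $\tfrac{3}{2}$ emerges only because the contribution of $H(1,2,1)$ partially cancels those of $H(1,1,1,1)$, rather than reinforcing them. Ensuring that the sign of $H(1,2,1)$ is consistent with the value of $H(1,3)$ forced by Theorem \ref{weighttwo} is what makes the delicate cancellation work; mis-signing any one of the four MHS contributions spoils the final coefficient.
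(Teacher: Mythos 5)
Your proof is correct and follows essentially the same route as the paper: specialize Theorem \ref{principallengthfour} to $s_1=s_2=s_3=s_4=1$, drop the $H(1)$-term via Wolstenholme, and evaluate $2H(\{1\}^4)+2H(1,2,1)+H(2,1,1)+H(3,1)$ using Theorem \ref{cancelation2}, the quoted Lemma, and the mod-$p^2$ case of Theorem \ref{weighttwo}, arriving at the same coefficient $-\tfrac{3}{2}$. Your explicit handling of the sign of $H(1,2,1)$ (taking $-\tfrac{9}{10}pB_{p-5}$, consistent with $H(1,3)$) is in fact what the paper silently does in its final line, despite the Lemma's displayed sign.
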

\begin{proof}
From the previous Theorem, we have modulo $p^2$
\begin{align*}
-\sum_{j=1}^{p-1}\frac{H_j^{3}}{j}\equiv&2H(\left\lbrace 1\right\rbrace^4 )+2H(1,2,1)+H(2,1,1)+H(3,1)\\
-\sum_{j=1}^{p-1}\frac{H_j^{3}}{j}\equiv&-\frac{2}{5}pB_{p-5}-\frac{9}{5}pB_{p-5}+\frac{3}{5}pB_{p-5}+\frac{1}{10}pB_{p-5}=-\frac{3}{2}pB_{p-5}.
\end{align*}
Note that another proof can be found in \cite{mistrovic2}.
\end{proof}

\begin{lemma}\cite{tauraso1}
Let $a,b$ be non-negative integers and a prime $p > 2a + 2b + 3$. Then
\begin{equation}
H(\left\lbrace 2\right\rbrace^a,3,\left\lbrace 2\right\rbrace^b )\equiv \frac{(-1)^{a+b}(a-b)}{(a+1)(b+1)}{2a+2b+2\choose 2a+1}B_{p-2a-2b-3} \pmod p,
\end{equation}
And for primes such that $p > 2a + 2b + 1$, we have
\begin{equation}
H(\left\lbrace 2\right\rbrace^a,1,\left\lbrace 2\right\rbrace^b )\equiv \frac{4(-1)^{a+b}(a-b)(1-4^{-a-b})}{(2a+1)(2b+1)}{2a+2b\choose 2a}B_{p-2a-2b-1} \pmod p.
\end{equation}
\end{lemma}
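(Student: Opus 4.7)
The plan is to use a two-variable generating function together with known $p$-adic congruences. Define
$$F_s(x,y) \;:=\; \sum_{a,b\geq 0} H(\{2\}^a, s, \{2\}^b)\, x^{2a} y^{2b} \;=\; \sum_{j=1}^{p-1} \frac{1}{j^s}\prod_{k=1}^{j-1}\!\left(1+\frac{x^2}{k^2}\right)\prod_{k=j+1}^{p-1}\!\left(1+\frac{y^2}{k^2}\right),$$
since expanding the two products in elementary symmetric functions in the $1/k^2$'s and extracting the coefficient of $x^{2a}y^{2b}$ recovers exactly $H(\{2\}^a,s,\{2\}^b)$. First I would observe that the involution $j\mapsto p-j$ on the summation indices gives the reflection congruence $H(\{2\}^a,s,\{2\}^b)\equiv (-1)^{2a+s+2b} H(\{2\}^b,s,\{2\}^a)\pmod p$, which for $s$ odd is antisymmetric in $(a,b)$. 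This already forces a factor $(a-b)$ in both target formulas and cuts the problem in half: only the antisymmetric part of $F_s(x,y)$ under $x\leftrightarrow y$ needs to be computed.

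Next I would pull the two partial products into a single full product $\prod_{k=1}^{p-1}(1+y^2/k^2)$ divided by $\prod_{k=1}^{j}(1+y^2/k^2)$, and use the classical Fermat-type congruence $\prod_{k=1}^{p-1}(1-u/k)\equiv 1-u^{p-1}\pmod p$ to express the full product over $k=1,\dots,p-1$ in closed form modulo $p$. After this simplification, the remaining dependence on $j$ is rational in $j^2-x^2$ and $j^2-y^2$, which invites a partial-fraction expansion in $j$. Each resulting summand collapses, via Theorem \ref{cancelation2}, to a single Bernoulli number $B_{p-m-1}$ where $m$ is the (odd) exponent on the corresponding generalized harmonic sum $H(m)$.

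Finally, comparing coefficients of $x^{2a}y^{2b}$ on both sides of the resulting identity, the finite linear combination of Bernoulli numbers collapses to $B_{p-2a-2b-3}$ in the $s=3$ case and to $B_{p-2a-2b-1}$ in the $s=1$ case, since Bernoulli numbers of smaller (even) index that could appear carry an odd subscript and thus vanish. The binomial factor $\binom{2a+2b+2}{2a+1}$ (respectively $\binom{2a+2b}{2a}$) emerges naturally from Taylor expanding the rational closed form of $F_s(x,y)$ around $x=y=0$.

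The main obstacle is the bookkeeping in the partial-fraction step: the cross-terms from the two partial products create many contributions, and verifying that all but the claimed Bernoulli piece cancels requires careful use of the $(a,b)$-antisymmetry observed at the start. The cleanest potential pitfall is the peculiar factor $(1-4^{-a-b})$ that appears only in the $s=1$ formula. I expect it to arise from a half-argument substitution ($y\mapsto x/2$, or equivalently the identity $1/(j^2-x^2/4)=4/((2j)^2-x^2)$) used to reconcile the closed form of $\prod(1+x^2/k^2)$ with its partial fractions, and tracking this substitution cleanly through to a single Bernoulli number is where the delicate computation lies.
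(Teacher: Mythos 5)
This lemma is quoted by the paper from \cite{tauraso1} without proof, so your attempt can only be measured against the method of that cited source, which does indeed start from the same two-variable generating function $F_s(x,y)=\sum_j j^{-s}\prod_{k<j}(1+x^2/k^2)\prod_{k>j}(1+y^2/k^2)$ and the reversal congruence $H(\{2\}^a,s,\{2\}^b)\equiv(-1)^sH(\{2\}^b,s,\{2\}^a)\pmod p$; those two ingredients of your sketch are correct. The gap is in the central evaluation. After you replace $\prod_{k>j}(1+y^2/k^2)$ by $\prod_{k=1}^{p-1}(1+y^2/k^2)\big/\prod_{k\le j}(1+y^2/k^2)$ and apply the Fermat-type congruence, the summand becomes $\frac{j^{2-s}}{j^2+y^2}\prod_{k=1}^{j-1}\frac{k^2+x^2}{k^2+y^2}$; this is \emph{not} a rational function of $j$ (the product's length depends on $j$), so "a partial-fraction expansion in $j$" is not an available step. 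What is actually needed here is a genuine hypergeometric/WZ-type identity — the finite analogue of Leshchiner's series, which is the whole point of \cite{tauraso1} — converting this sum into one over central binomial coefficients $\binom{2k}{k}$ and half-range power sums; that identity is where the factors $4(1-4^{-a-b})$ and $\binom{2a+2b}{2a}$ really come from, and it is neither stated nor derived in your sketch.

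A second, concrete error: you claim each piece "collapses, via Theorem \ref{cancelation2}, to a single Bernoulli number $B_{p-m-1}$ where $m$ is the exponent on $H(m)$." But Theorem \ref{cancelation2} (with $k=1$) says $H(m)\equiv 0\pmod p$ for every $1\le m\le p-2$; full-range power sums contribute nothing modulo $p$. The nonzero Bernoulli contributions in the target congruences must come from depth-two sums $H(s_1,s_2)$ (Theorem \ref{weighttwo}) or from restricted-range sums such as $\sum_{k\le (p-1)/2}k^{-m}$, whose known congruences are precisely what produce the $2$-power factor you flag as a "pitfall." So the mechanism you propose for generating the right-hand sides would actually yield $0$, and the step that would rescue it is the one you have deferred. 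The setup is sound, but as written the proof does not go through.
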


\begin{corollary}For $p\geq 11$ we have
\begin{equation}
-\frac{1}{13}\sum_{j=1}^{p-1}\frac{(H_j^{(2)})^2H_j^{(3)}}{j^{2}}\equiv \frac{3}{83} \sum_{j=1}^{p-1}\frac{(H_j^{(2)})^3}{j^{3}}\equiv B_{p-9} \pmod p,
\end{equation}
\begin{equation}
-\frac{8}{21}\sum_{j=1}^{p-1}\frac{(H_j^{(2)})^2H_j}{j^{2}}\equiv\frac{1}{3}\sum_{j=1}^{p-1}\frac{(H_j^{(2)})^3}{j} \equiv B_{p-7}\pmod p.
\end{equation}
\end{corollary}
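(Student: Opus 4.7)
The approach is to apply Theorem \ref{principallengthfour} once to each of the four sums, choosing $(s_1,s_2,s_3,s_4)$ so that the multiset $\{s_1,s_3,s_4\}$ matches the three superscripts $H_j^{(\cdot)}$ in the numerator and $s_2$ matches the exponent of $j$ in the denominator. Concretely, I would take $(s_1,s_2,s_3,s_4)=(2,2,2,3)$ for $\sum(H_j^{(2)})^2H_j^{(3)}/j^2$, the permutation $(2,3,2,2)$ for $\sum(H_j^{(2)})^3/j^3$, then $(2,2,2,1)$ for $\sum(H_j^{(2)})^2H_j/j^2$, and $(2,1,2,2)$ for $\sum(H_j^{(2)})^3/j$. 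The total weight $w=s_1+s_2+s_3+s_4$ is $9$ for the first two sums and $7$ for the last two, matching the $B_{p-9}$ and $B_{p-7}$ on the right-hand side of the corollary.

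In every case $s_4\in\{1,2,3\}$, so Theorem \ref{cancelation} yields $H(s_4)\equiv 0\pmod p$; hence the term $H(s_4)\sum H_j^{(s_1)}H_j^{(s_3)}/j^{s_2}$ in \eqref{principallengthfoureq} drops out modulo $p$, and the sum on the left is congruent mod $p$ to the negative of the remaining six MHS. Because $s_1=s_3=2$, the two length-four terms $H(s_1,s_3,s_2,s_4)$ and $H(s_3,s_1,s_2,s_4)$ coincide, and similarly the two length-three terms $H(s_3,s_1+s_2,s_4)$ and $H(s_1,s_2+s_3,s_4)$ coincide; only four distinct MHS remain to be evaluated. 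The length-four piece is of the shape $H(\{2\}^a,t,\{2\}^b)$ with $t\in\{1,3\}$ and $(a,b)=(3,0)$ or $(2,1)$, which is supplied by the Tauraso lemma stated just before the corollary. The two length-three MHS that appear (of the form $H(2,2+s_2,s_4)$ and $H(4,s_2,s_4)$) both have odd weight $w\in\{7,9\}$, so their values mod $p$ come from Theorem \ref{weightthree}; the unique length-two piece $H(4+s_2,s_4)$ is handled by Theorem \ref{weighttwo}.

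Putting everything together, one assembles four contributions of the form $(\mathrm{rational})\cdot B_{p-w}$ and collects them into the claimed constant multiple of $B_{p-w}$. The main obstacle is purely arithmetic bookkeeping: for the weight-$9$ sums one has to combine values such as $-6$, $\tfrac{20}{3}$, $\tfrac{35}{3}$ and $-\tfrac{28}{3}$ (the outputs of the Tauraso formula with $t=3$, of Theorem \ref{weightthree} applied to $H(2,4,3)$ and $H(4,2,3)$, and of Theorem \ref{weighttwo} applied to $H(6,3)$) with the correct multiplicities, and the $t=1$ version of the Tauraso formula introduces the denominators $(2a+1)(2b+1)$ together with a factor $(1-4^{-a-b})$, which makes the simplification for the weight-$7$ statements a bit more delicate. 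A numerical check at $p=11$ (for the weight-$7$ congruences) and $p=13$ (for the weight-$9$ congruences) is prudent to verify the final rational constants.
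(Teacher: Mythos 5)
Your plan is exactly the paper's proof: the same parameter choices in Theorem \ref{principallengthfour}, the same use of $H(s_4)\equiv 0\pmod p$ to discard the first term, the same collapse of the six MHS to four via the symmetry $s_1=s_3$, and the same three evaluation tools (the Tauraso lemma, Theorem \ref{weightthree}, Theorem \ref{weighttwo}). Your individual evaluations for the weight-$9$ case are all correct: $H(\{2\}^3,3)\equiv -6B_{p-9}$, $H(2,4,3)\equiv \tfrac{20}{3}B_{p-9}$, $H(4,2,3)\equiv \tfrac{35}{3}B_{p-9}$ and $H(6,3)\equiv -\tfrac{28}{3}B_{p-9}$.

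However, the step you defer as ``purely arithmetic bookkeeping'' is precisely where the argument fails to deliver the statement as written. Assembling your own values gives $-\sum_{j}(H_j^{(2)})^2H_j^{(3)}/j^2 \equiv \bigl(2(-6)+2\cdot\tfrac{20}{3}+\tfrac{35}{3}-\tfrac{28}{3}\bigr)B_{p-9}=\tfrac{11}{3}B_{p-9}$, so the sum is $\equiv -\tfrac{11}{3}B_{p-9}$, not $-13B_{p-9}$; the paper reaches $-13$ only by setting $H(6,3)\equiv 0$, which contradicts Theorem \ref{weighttwo} and your own value $-\tfrac{28}{3}$. The numerical check you propose settles it: at $p=11$ the sum is $\equiv 0$, consistent with $-\tfrac{11}{18}\equiv 0$ but not with $-13B_2\equiv 7\pmod{11}$. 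The same defect affects the second congruence (the term produced by Theorem \ref{principallengthfour} is $H(s_1+s_3,s_2,s_4)=H(4,3,2)\equiv 5B_{p-9}$, not $H(5,2,2)$, and $H(7,2)\equiv 4B_{p-9}\neq 0$, giving the constant $\tfrac{29}{3}$ rather than $\tfrac{83}{3}$) and the fourth one ($\sum(H_j^{(2)})^3/j\equiv\tfrac{31}{8}B_{p-7}$, not $3B_{p-7}$); only the third congruence survives as stated. So carry out the bookkeeping and the $p=11$ check in full: you will find that it is the corollary's constants, not your intermediate values, that need correcting, and as written your proposal cannot ``collect them into the claimed constant.''
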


\begin{proof}

For the first congruence, we apply the previous Lemma, Theorems \ref{principallengthfour}, \ref{weighttwo} and \ref{weightthree} and obtain
\begin{align*}
\sum_{j=1}^{p-1}\frac{(H_j^{(2)})^2H_j^{(3)}}{j^{2}}&=-2H(\left\lbrace 2\right\rbrace^3,3)-2H(2,4,3)-H(4,2,3)-H(6,3)       \\
&\equiv \big[+12-\frac{40}{3}-\frac{35}{3}-0 \big]B_{p-9}=-13B_{p-9},
\end{align*}
also
\begin{align*}
\sum_{j=1}^{p-1}\frac{(H_j^{(2)})^3}{j^{3}}&=-2H(\left\lbrace 2 \right\rbrace^2,3,2)-2H(2,5,2)-H(5,2,2)-H(7,2)       \\
&\equiv \big[ +\frac{56}{3}-0+9-0\big]B_{p-9}=\frac{83}{3}B_{p-9}.
\end{align*}
The second part of the corollary is proved in a quite similar manner.

\end{proof}

\bibliographystyle{unsrt}

\begin{thebibliography}{\textbf{References}}
\bibitem{tauraso1}
KH. Hessami Pilehrood, T. Hessami Pilehrood, and R. Tauraso,
\textit{New properties of  multiple harmonic sums modulo p and p-analogues of Leshchiner's series}, Trans. Amer. Math. Soc ,Volume 366, Number 6, June 2014, Pages 3131–3159.
\bibitem{hoffman}
M.E. Hoffman,
\textit{Quasi-symmetric functions and mod p multiple harmonic sums}, math.NT/0401319.
\bibitem{mistrovic1}
R. Meštrović,
\textit{An Extension of a Congruence by Tauraso}, ISRN Combinatorics, Volume 2013, Article ID 363724, 7 pages.
\bibitem{mistrovic2}
R. Meštrović,
\textit{Proof of a congruence for harmonic numbers conjectured by Z.-W.Sun},Int. J. Number
Theory Vol.8, No.4 (2012), 1081–1085.





\bibitem{sun1}
Z.-W. Sun,
\textit{Arithmetic theory of harmonic numbers}. Proc. Am. Math. Soc. 140, 415–428 (2012).

\bibitem{sun2}
Z.-W. Sun, L.-L. Zhao,
\textit{Arithmetic theory of harmonic numbers (II)}. Colloq. Math. 130, 67–78 (2013).

\bibitem{tauraso}
R. Tauraso,
\textit{New harmonic number identities with applications}, Sémin. Lothar. Combin. 63 (2010) Article B63g.

\bibitem{zhao}
J. Zhao,
\textit{Wolstenholme type theorem for multiple harmonic sum}, Int. J. of Number
Theory 4 (2008), 73–106.
\end{thebibliography}

\end{document}